\documentclass[12pt]{article} 
\usepackage[sectionbib]{natbib}
\usepackage{array,epsfig,fancyheadings,rotating}
\usepackage{longtable,booktabs}
\usepackage[]{hyperref}  
\usepackage{sectsty, secdot}
\sectionfont{\fontsize{12}{14pt plus.8pt minus .6pt}\selectfont}
\renewcommand{\theequation}{\thesection\arabic{equation}}
\subsectionfont{\fontsize{12}{14pt plus.8pt minus .6pt}\selectfont}


\textwidth=31.9pc
\textheight=46.5pc
\oddsidemargin=1pc
\evensidemargin=1pc
\headsep=15pt
\topmargin=.6cm
\parindent=1.7pc
\parskip=0pt

\usepackage{amsmath}
\usepackage{amssymb}
\usepackage{amsfonts}
\usepackage{multirow}
\usepackage{amsthm}
\usepackage{float}

\setcounter{page}{1}

\newtheorem{lemma}{Lemma}
\newtheorem{corollary}{Corollary}
\newtheorem{proposition}{Proposition}
\theoremstyle{definition}

\pagestyle{fancy}



\usepackage{xcolor}

\newcommand{\bs}[1]{\boldsymbol{#1}}
\newcommand{\bo}{\mathbf}
\newtheorem{assumption}{Assumption}
\usepackage{colortbl}

\pagestyle{fancy}

\lhead[\fancyplain{} \leftmark]{}
\chead[]{}
\rhead[]{\fancyplain{}\rightmark}
\cfoot{}


\begin{document}


\renewcommand{\baselinestretch}{2}


\markboth{\hfill{\footnotesize\rm JONI VIRTA AND KLAUS NORDHAUSEN} \hfill}
{\hfill {\footnotesize\rm} \hfill}

\renewcommand{\thefootnote}{}
$\ $\par


\fontsize{12}{14pt plus.8pt minus .6pt}\selectfont \vspace{0.8pc}
\centerline{\large\bf DETERMINING THE SIGNAL DIMENSION IN}
\vspace{2pt} \centerline{\large\bf SECOND ORDER SOURCE SEPARATION}
\vspace{.4cm} \centerline{Joni Virta$^1$ and Klaus Nordhausen$^2$} \vspace{.4cm} \centerline{\it
$^1$Aalto University, Finland}
\centerline{\it $^2$Vienna University of Technology, Austria} \vspace{.55cm} \fontsize{9}{11.5pt plus.8pt minus
.6pt}\selectfont


\begin{quotation}
\noindent {\it Abstract:}
While an important topic in practice, the estimation of the number of non-noise components in blind source separation has received little attention in the literature. Recently, two bootstrap-based techniques for estimating the dimension were proposed, and although very efficient, they suffer from the long computation times caused by the resampling. We approach the problem from a large sample viewpoint and develop an asymptotic test for the true dimension. Our test statistic based on second-order temporal information has a very simple limiting distribution under the null hypothesis and requires no parameters to estimate. Comparisons to the resampling-based estimates show that the asymptotic test provides comparable error rates with significantly faster computation time. An application to sound recording data is used to illustrate the method in practice.

\vspace{9pt}
\noindent {\it Key words and phrases:}
Blind source separation, chi-square distribution, second order blind identification, second order stationarity, white noise.
\par
\end{quotation}\par

\def\thefigure{\arabic{figure}}
\def\thetable{\arabic{table}}

\renewcommand{\theequation}{\thesection.\arabic{equation}}

\fontsize{12}{14pt plus.8pt minus .6pt}\selectfont

\setcounter{equation}{0} 

\section{Introduction}


The modelling of multivariate time series is notoriously difficult and an increasingly common option is to use latent variable or factor models \citep[see for example][and the references therein]{Ensor2013,ChangGuoYao2018}. In this paper we will follow the \textit{blind source separation} (BSS) approach, as an intermediary step prior to modelling. In BSS the observed multivariate time series is bijectively decomposed into several univariate time series that exhibit some form of mutual independence, such as second order uncorrelatedness or even full statistical independence. After such a decomposition, the lack of interaction between the univariate series allows us to model them separately, requiring much smaller numbers of parameters.

A particularly popular choice among BSS models for time series is the second order source separation (SOS) model \citep{ComonJutten2010} which assumes that the observed zero-mean $ p $-variate time series $ \textbf{x}_1, \ldots , \textbf{x}_T $ is generated as,
\begin{align}\label{eq:model}
\textbf{x}_t = \boldsymbol{\Omega} \textbf{z}_t, \quad t = 1, \ldots , T,
\end{align}
where the \textit{source} series $ \textbf{z}_1, \ldots , \textbf{z}_T$ is a latent non-degenerate, zero-mean, second-order stationary $ p $-variate time series with uncorrelated component series and $ \boldsymbol{\Omega} \in \mathbb{R}^{p \times p} $ is an unknown, invertible matrix-valued parameter. The assumption of zero mean is without loss of generality as we can always center our series. The objective in model \eqref{eq:model} is to estimate an inverse $ \hat{\boldsymbol{\Gamma}} $ for $ \boldsymbol{\Omega} $, giving us an estimate $ \hat{\textbf{z}}_t = \hat{\boldsymbol{\Gamma}} \hat{\textbf{x}}_t$ for the $ p $ sources, which can then further be modelled univariately. 

However, noise is often an inevitable part of any real world signal and we incorporate it in the model \eqref{eq:model} through the sources. That is, we assume that the sources consist of two parts, $ \textbf{z}_t = (\textbf{z}_{1t}^\top, \textbf{z}_{2t}^\top)^\top $, where $ \textbf{z}_{1t} \in \mathbb{R}^{d} $ contains the signals and $ \textbf{z}_{2t} \in \mathbb{R}^{p - d} $ is a white noise vector. To avoid overfitting in the modelling phase, a crucial step in BSS is to identify the noise subvector $ \textbf{z}_{2t} $ among the sources and discard it prior to the modelling. This problem, signal dimension estimation, has only recently been considered in the context of statistical blind source separation and in this paper we propose a novel estimate that relies on asymptotic hypothesis testing. But first, we review two classical SOS-methods that serve as the basis for both our method and the two existing ones.


The standard way of estimating the sources in \eqref{eq:model} is via second-order temporal moments. In algorithm for multiple signals extraction (AMUSE) \citep{TongSoonHuangLiu1990}, an estimate $ \hat{\boldsymbol{\Gamma}} $ is obtained from the generalized eigendecomposition,
\begin{equation*}
\hat{\boldsymbol{\Gamma}} \hat{\textbf{S}}_0 \hat{\boldsymbol{\Gamma}}^\top = \textbf{I}_p \quad \mbox{and} \quad \hat{\boldsymbol{\Gamma}} \hat{\textbf{R}}_\tau \hat{\boldsymbol{\Gamma}}^\top = \hat{\textbf{D}}_\tau,
\end{equation*}
where $ \hat{\textbf{S}}_0 = (1/T) \sum_{t=1}^T (\textbf{x}_t - \bar{\textbf{x}}) (\textbf{x}_t - \bar{\textbf{x}})^\top$ is the marginal covariance matrix, $ \hat{\textbf{R}}_{\tau} = (\hat{\textbf{S}}_{\tau} + \hat{\textbf{S}}{}_{\tau}^\top )/2$, $ \hat{\textbf{S}}_{\tau} = [1/(T - \tau)] \sum_{t=1}^{T - \tau} (\textbf{x}_t  - \bar{\textbf{x}}) (\textbf{x}_{t + \tau}  - \bar{\textbf{x}})^\top$ is the $ \tau $-lag autocovariance matrix and $ \hat{\textbf{D}}_\tau $ is a diagonal matrix. If the lag-$ \tau $ autocovariances of the latent series are distinct, then $ \hat{\boldsymbol{\Gamma}} $ is consistent for $ \boldsymbol{\Omega}^{-1} $ up to permutation and signs of its rows. For the statistical properties of AMUSE, see \citet{MiettinenNordhausenOjaTaskinen2012}.

A usually more efficient estimate, which does not depend so much on the selection of a single parameter $\tau$, is given by second order blind identification (SOBI) \citep{BelouchraniAbedMeraimCardosoMoulines1997}, an extension of AMUSE to multiple autocovariance matrices. In SOBI we choose a set of lags $ \mathcal{T} $ and estimate the orthogonal matrix $ \hat{\textbf{U}} $ by maximizing,
\begin{align}\label{eq:sobi_diagonalization} 
\sum_{\tau \in \mathcal{T}} \left\| \mathrm{diag} \left( \textbf{U}^\top \hat{\textbf{S}}_{0}^{-1/2} \hat{\textbf{R}}_{\tau} \hat{\textbf{S}}_{0}^{-1/2} \textbf{U}  \right) \right\|^2,
\end{align}
over the set of all orthogonal $ \textbf{U} \in \mathbb{R}^{p \times p} $, where $ \hat{\textbf{S}}_{0}^{-1/2} $ is the unique symmetric inverse square root of the almost surely positive definite matrix $ \hat{\textbf{S}}_{0} $. This procedure is called orthogonal (approximate) joint diagonalization and provides a natural extension of the generalized eigendecomposition to more than two matrices. Note that this makes AMUSE a special case of SOBI with $ | \mathcal{T} | = 1 $. An estimate for $ \boldsymbol{\Omega}^{-1} $ is given by $ \hat{\boldsymbol{\Gamma}} = \hat{\textbf{U}}{}^\top \hat{\textbf{S}}{}_{0}^{-1/2} $ and it is consistent (up to permutation and signs of its rows) if, for all pairs of sources, there exists a lag $ \tau \in \mathcal{T} $, such that the lag-$ \tau $ autocovariances of the two sources differ, see \citet{BelouchraniAbedMeraimCardosoMoulines1997,MiettinenNordhausenOjaTaskinen2014b,MiettinenIllnerNordhausenOjaTaskinenTheis2016}. For more details about solving the maximization problem in (\ref{eq:sobi_diagonalization}) see for example \citet{IllnerMiettinenFuchsTaskinenNordhausenOjaTheis2015} and the references therein.

We now turn back to our problem at hand, the signal dimension estimation. Of the two estimates proposed in literature, both rely on SOBI (with AMUSE as a special case) and the first \citep{MatilainenNordhausenVirta2017} bases the approach on the following set of hypotheses for $ q = 0, \ldots , p - 1 $, 
\begin{align}\label{eq:null_hypotheses}
H_{0q}: \mbox{ $\textbf{z}_t$ contains a $ p - q $-subvector of white noise.}
\end{align}
A suitable test statistic for $ H_{0q} $ is given, e.g., by the mean of the last $ p - q $ squared diagonal elements of $ \hat{\textbf{U}}{}^\top \hat{\textbf{S}}{}_{0}^{-1/2} \hat{\textbf{R}}_{\tau} \hat{\textbf{S}}{}_{0}^{-1/2} \hat{\textbf{U}}  $ over all $ \tau \in \mathcal{T} $, where $ \hat{\textbf{U}} $ is the maximizer of \eqref{eq:sobi_diagonalization}. This is based on the fact that all autocovariances of white noise series vanish, see Section \ref{sec:main} for a more detailed motivation. \cite{MatilainenNordhausenVirta2017} use bootstrapping to obtain the null distributions of the test statistics and sequence several of the tests together to estimate the signal dimension $ d $, see the end of Section \ref{sec:main} for various strategies. Similar techniques have been used for the dimension estimation of iid data in \cite{NordhausenOjaTyler2016, NordhausenOjaTylerVirta2017}.  

An alternative approach is proposed by \cite{NordhausenVirta2018} who extend the \textit{ladle estimate} of \cite{LuoLi2016} to the time series framework. The estimate is based on combining the classical scree plot with the bootstrap variability \citep{YeWeiss2003} of the joint diagonalizer and has the advantage of estimating the dimension directly, without any need for hypothesis testing.

We complement these approaches by devising an asymptotic test for the null hypotheses \eqref{eq:null_hypotheses}, operating under semiparametric assumptions on the source distributions. Using simulations, the test is showed to enjoy the standard properties of asymptotic tests, computational speed and efficiency under time series of moderate and large lengths. The first of these properties is especially important and desirable, considering that the only competitors of the proposed method are based on computationally costly data resampling techniques. Moreover, the mathematical form of the proposed asymptotic test is shown to be particularly simple and elegant.

The paper is structured as follows. In Section \ref{sec:main} we present our main results and discuss the implications and strictness of the assumptions required for them to hold. Section \ref{sec:theory} contains the technical derivations that lead to the results in Section \ref{sec:main} and can be safely skipped by a casual reader. The proofs of the results are collected in Appendix \ref{sec:appendix}. Section \ref{sec:simulations} sees us comparing the proposed dimension estimate to the bootstrap- and ladle estimates under various settings using simulated data. In Section \ref{sec:data_example} we apply the proposed method to estimate the dimension of a sound recording data set and in Section \ref{sec:discussion}, we finally conclude with some prospective ideas for future research.

\section{Main results}\label{sec:main}

In this section we present our main results and the assumptions required by them. The more technical content is postponed to Section \ref{sec:theory} and can be skipped if the reader is not interested in the theory behind the results.


Let the observed time series $ \textbf{x}_t $ come from the SOS-model \eqref{eq:model} and denote by $ \lambda^*_{\tau k} $ the $ \tau $-lag autocovariance of the $ k $th component of $ \textbf{z}_t $. Recall that SOBI jointly diagonalizes the set of standardized and symmetrized autocovariance matrices $ \hat{\textbf{H}}_\tau = \hat{\textbf{S}}{}_{0}^{-1/2} \hat{\textbf{R}}_{\tau} \hat{\textbf{S}}{}_{0}^{-1/2} $, $ \tau \in \mathcal{T} $, to obtain the orthogonal matrix $ \hat{\textbf{U}} $. Order the columns of $ \hat{\textbf{U}} $ such that the sums of squared pseudo-eigenvalues, $ \sum_{\tau \in \mathcal{T}} \mathrm{diag}(\hat{\textbf{U}}{}^\top \hat{\textbf{H}}_\tau \hat{\textbf{U}})^2$, are in a decreasing order and partition  $ \hat{\textbf{U}} $ as $ ( \hat{\textbf{V}}_q, \hat{\textbf{W}}_q) $ where $ \hat{\textbf{V}}_q \in \mathbb{R}^{p \times q}, \hat{\textbf{W}}_q \in \mathbb{R}^{p \times (p - q) } $ for some fixed $ q $. We show in Section \ref{sec:theory} that, for large $ T $, this ordering places the noise components after the signals in the estimated sources. If the null hypothesis $ H_{0q} $ is true, the autocovariance matrices of the last $ p - q $ estimated sources,
\[ 
\hat{\textbf{D}}_{\tau q} = \hat{\textbf{W}}_q^\top \hat{\textbf{H}}_\tau \hat{\textbf{W}}_q,
\]
are then expected to be close to zero matrices due to the last sources being (at least, asymptotically) white noise. To accumulate information over multiple lags, we use as our test statistic for $ H_{0q} $ the mean of the squared elements of the matrices $ \hat{\textbf{D}}_{\tau q} $ over a fixed set of lags $ \tau \in \mathcal{T} $,
\[ 
\hat{m}_q = \frac{1}{| \mathcal{T}| (p - q)^2} \sum_{\tau \in \mathcal{T}} \| \hat{\textbf{D}}_{\tau q} \|^2,
\] 
which likewise measures departure from the null hypothesis $ H_{0q} $. In the special case of AMUSE we have only a single matrix $ \hat{\textbf{D}}_{\tau q} $, which can in practice be obtained using the easier-to-compute generalized eigendecomposition, instead of joint diagonalization. Note that it is possible for the matrices $ \hat{\textbf{D}}_{\tau q} $ to be small in magnitude even if the number of white noise sources in the model is less than $ p - q $. This situation can arise if some of the signal series are indistinguishable from white noise based on autocovariances alone and as such we need to restrict the set of signal distributions we can consider. The next assumption guarantees that each signal component exhibits non-zero autocovariance for at least one lag $ \tau \in \mathcal{T} $, and is thus distinguishable from white noise.

\begin{assumption}\label{assu:signal_from_noise}
	For all $ k = 1, \ldots , d $, there exists $ \tau \in \mathcal{T} $ such that $ \lambda^*_{\tau k} \neq 0$.
\end{assumption}

Considering that most signals encountered in practice exhibit autocorrelation, Assumption \ref{assu:signal_from_noise} is rather non-restrictive. Moreover, we can always increase the number of feasible signal processes by incorporating more lags in $ \mathcal{T} $. However, there exists time series which, while not being white noise, still have zero autocorrelation for all finite lags. For example, stochastic volatility models \citep[see for example][]{MikoschEtAl2009} belong to this class of processes, and consequently, by Assumption \ref{assu:signal_from_noise}, they are excluded from our model (see, however Section \ref{sec:discussion} for an idea on how to incorporate these distributions in the model).

The second assumption we need is more technical in nature and requires that the source series come from a specific, wide class of stochastic processes. A similar assumption is utilized also in \citet{MiettinenNordhausenOjaTaskinen2012,MiettinenNordhausenOjaTaskinen2014b,MiettinenIllnerNordhausenOjaTaskinenTheis2016}.
\begin{assumption}\label{assu:ma_infinity}
	The latent series $ \textbf{z}_t $ are linear processes having the $ \mathrm{MA}(\infty) $-representation,
	\[ 
	\textbf{z}_t = \sum_{j=-\infty}^\infty \boldsymbol{\Psi}_j \boldsymbol{\epsilon}_{t - j},
	\]
	where $ \boldsymbol{\epsilon}_t \in \mathbb{R}^p$ are second-order standardized, iid random vectors with exchangeable, marginally symmetric components having finite fourth order moments and $ \boldsymbol{\Psi}_j \in \mathbb{R}^{p \times p} $ are diagonal matrices satisfying $ \sum_{j=-\infty}^\infty \boldsymbol{\Psi}_j^2 = \textbf{I}_p $ and $ \| \sum_{j=-\infty}^\infty |\boldsymbol{\Psi}_j| \| < \infty$ where $ | \boldsymbol{\Psi}_j | \in \mathbb{R}^{p \times p}$ denotes the matrix of component-wise absolute values of $ \boldsymbol{\Psi}_j $. Moreover the lower right $ (p - d) \times (p - d) $ blocks of $ \boldsymbol{\Psi}_j $ (the noise) equal $ \boldsymbol{\Psi}_{j00} = \delta_{j0} \textbf{I}_{p - d} $, where $ \delta_{\cdot \cdot} $ is the Kronecker delta.
\end{assumption}

Note that all second-order stationary multivariate time series can by Wold's decomposition be given a $ \mathrm{MA}(\infty) $-representation, meaning that the most stringent part of Assumption \ref{assu:ma_infinity} is that it requires the innovations of the sources to have identical, symmetric marginal distributions. The importance of Assumption \ref{assu:ma_infinity} to the theory comes from the fact that under it the joint limiting distribution of the sample autocovariance matrices can be derived. As such, it could also be replaced with some other assumption guaranteeing the same thing.

With the previous, we are now able to present our main result.
\begin{proposition}\label{prop:1}
	Under Assumptions \ref{assu:signal_from_noise}, \ref{assu:ma_infinity} and the null hypothesis $ H_{0q} $,
	\[ 
	T |\mathcal{T}| (p - q)^2 \cdot \hat{m}_q \rightsquigarrow \chi^2_{| \mathcal{T} |(p - q)(p - q + 1)/2},
	\]
	where $ \chi^2_\nu $ denotes the chi-squared distribution with $ \nu $ degrees of freedom.
\end{proposition}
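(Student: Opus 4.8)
The prefactor in the statement cancels exactly, so the goal is to show that $T \sum_{\tau \in \mathcal{T}} \| \hat{\textbf{D}}_{\tau q} \|^2 \rightsquigarrow \chi^2_\nu$ with $\nu = |\mathcal{T}|(p-q)(p-q+1)/2$. The plan is to first exploit the affine equivariance of SOBI to reduce to the canonical case $\boldsymbol{\Omega} = \textbf{I}_p$, so that the observations coincide with the sources and the population version of $\hat{\textbf{H}}_\tau$ is the diagonal matrix of pseudo-eigenvalues, with vanishing entries in the $p-d$ noise positions. Since each $\hat{\textbf{D}}_{\tau q}$ is a symmetric $(p-q)\times(p-q)$ matrix, it carries exactly $(p-q)(p-q+1)/2$ functionally independent entries, which already foreshadows the per-lag degrees of freedom.

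The first substantial step is to show that the estimation of the diagonalizer does not contribute at leading order. Under Assumption~\ref{assu:signal_from_noise} the ordering places all $d$ signals first, so for large $T$ the columns of $\hat{\textbf{W}}_q$ estimate a matrix $\textbf{W}_q$ whose columns span (a rotation of) the noise subspace; the population relations $\textbf{H}_\tau \textbf{W}_q = \textbf{0}$ and $\textbf{W}_q^\top \textbf{H}_\tau \textbf{W}_q = \textbf{0}$ hold because the noise autocovariances vanish. Writing $\hat{\textbf{W}}_q = \textbf{W}_q + \boldsymbol{\Delta}$ with $\boldsymbol{\Delta} = O_P(T^{-1/2})$ and expanding $\hat{\textbf{D}}_{\tau q}$, every term involving $\boldsymbol{\Delta}$ is multiplied by a factor $\hat{\textbf{H}}_\tau \textbf{W}_q = (\hat{\textbf{H}}_\tau - \textbf{H}_\tau)\textbf{W}_q = O_P(T^{-1/2})$ and hence is $O_P(T^{-1})$, negligible after scaling by $\sqrt{T}$. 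This yields the key simplification $\sqrt{T}\,\hat{\textbf{D}}_{\tau q} = \sqrt{T}\,\textbf{W}_q^\top (\hat{\textbf{H}}_\tau - \textbf{H}_\tau) \textbf{W}_q + o_P(1)$. I would also record that the statistic is invariant under $\hat{\textbf{W}}_q \mapsto \hat{\textbf{W}}_q \textbf{O}$ for orthogonal $\textbf{O}$, since $\| \textbf{O}^\top \hat{\textbf{D}}_{\tau q} \textbf{O} \| = \| \hat{\textbf{D}}_{\tau q} \|$; this removes the rotational indeterminacy of the diagonalizer within the noise block and lets me fix $\textbf{W}_q$ as the coordinate selector onto the last $p-q$ sources.

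Next I would reduce the standardized block to the raw symmetrized autocovariance block. Because $\textbf{R}_\tau$ is diagonal in source coordinates with zero noise entries, the first-order fluctuation of $\hat{\textbf{S}}_0^{-1/2}$ contributes only through $\textbf{R}_\tau \textbf{W}_q = \textbf{0}$ and drops out, leaving $\textbf{W}_q^\top (\hat{\textbf{H}}_\tau - \textbf{H}_\tau) \textbf{W}_q = \textbf{W}_q^\top \hat{\textbf{R}}_\tau \textbf{W}_q + o_P(T^{-1/2})$, the symmetrized sample autocovariance of the pure white-noise part. Invoking the joint asymptotic normality of the sample autocovariances established in Section~\ref{sec:theory} under Assumption~\ref{assu:ma_infinity}, I would compute the limiting covariances directly. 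Since the lags in $\mathcal{T}$ are positive and the noise is iid (from $\boldsymbol{\Psi}_{j00} = \delta_{j0}\textbf{I}_{p-d}$), the products entering $\hat{\textbf{S}}_\tau$ involve distinct time points, so no fourth-moment term appears: the ordered-pair entries $\sqrt{T}\,\hat{S}_{\tau,ij}$ converge to mutually independent standard normals within a fixed lag, and entries at different positive lags are asymptotically uncorrelated. Consequently the symmetrized block $\sqrt{T}\,\textbf{W}_q^\top \hat{\textbf{R}}_\tau \textbf{W}_q$ has independent diagonal entries of variance $1$ and independent off-diagonal entries of variance $1/2$, independent across lags.

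Finally I would assemble the chi-square and count degrees of freedom. Writing $T \| \hat{\textbf{D}}_{\tau q} \|^2$ as the sum of squared entries, the $p-q$ diagonal entries each contribute a $\chi^2_1$, while each of the $(p-q)(p-q-1)/2$ unordered off-diagonal pairs is counted twice, so its contribution is $2 \times (1/2)\chi^2_1 = \chi^2_1$; the per-lag total is $(p-q) + (p-q)(p-q-1)/2 = (p-q)(p-q+1)/2$, and summing over the $|\mathcal{T}|$ independent lags gives the claimed $\nu$. The main obstacles I anticipate are the two technical pillars: rigorously establishing the $\sqrt{T}$-consistency and the orthogonality relations of $\hat{\textbf{W}}_q$ needed to discard the diagonalizer fluctuations, and verifying from the Section~\ref{sec:theory} limit law that the covariance structure is \emph{exactly} the independent, fourth-moment-free form above, which is precisely what makes the limit parameter-free and relies on the positivity of the lags together with the symmetry of the innovations.
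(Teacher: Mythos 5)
Your proposal is correct and follows the same overall architecture as the paper: reduce to identity mixing by equivariance, show the estimated diagonalizer perturbs the statistic only at order $o_p(1/T)$ so that $T\hat{m}_q$ is asymptotically equivalent to $(|\mathcal{T}|r^2)^{-1}T\sum_{\tau}\|\hat{\textbf{R}}_{\tau00}\|^2$ with $r=p-q$ (the paper's Lemma \ref{lem:2}), and then pass a joint normal limit for the noise-block autocovariances through the quadratic form. The execution differs in two places, both legitimately. First, where you remove the diagonalizer by writing $\hat{\textbf{W}}_q=\textbf{W}_q+\boldsymbol{\Delta}$ with $\boldsymbol{\Delta}=O_P(T^{-1/2})$ and using the orthogonal invariance of the Frobenius norm to kill the rotational indeterminacy of the noise block, the paper instead conditions on the event that the pseudo-eigenvalue ordering correctly isolates the noise columns (shown to have probability tending to one) and then uses $\hat{\textbf{U}}_{00}\hat{\textbf{U}}_{00}^\top=\textbf{I}_r+\mathcal{O}_p(1/T)$ inside a trace identity; both routes rest on the same hard ingredient, the root-$T$ rate for the off-diagonal blocks of the SOBI solution (the paper's Lemma \ref{lem:1}), which you correctly identify as the main technical pillar but do not supply. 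Second, for the limit law you compute the covariance of $\sqrt{T}\hat{\textbf{R}}_{\tau00}$ directly from the iid structure of the noise --- observing that positive lags keep the time indices distinct, so no fourth moments enter, the ordered entries are asymptotically independent standard normals, and the symmetrized off-diagonals have variance $1/2$ --- and then count degrees of freedom entry by entry. The paper instead specializes a general $\mathrm{MA}(\infty)$ covariance formula to obtain $\textbf{V}_0=(\textbf{K}_{rr}+\textbf{I}_{r^2})/2$ and verifies that $\textbf{V}$ is symmetric idempotent with trace $|\mathcal{T}|r(r+1)/2$. Your elementary computation yields exactly the same covariance structure and the same count $|\mathcal{T}|r(r+1)/2$, and arguably makes the parameter-free nature of the limit more transparent; the paper's route has the advantage of reusing machinery that also covers the non-null blocks. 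I see no genuine gap beyond the deferred rate lemma, which the paper likewise isolates as separate auxiliary material.
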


The limiting distribution in Proposition \ref{prop:1} is remarkably simple, does not depend on the type of white noise and requires no parameters to estimate, implying that it is also fast to use in practice. Note that the number of degrees of freedom of the limiting distribution is equal to the total number of free elements in the symmetric matrices $ \hat{\textbf{D}}_{\tau q} $, $ \tau \in \mathcal{T} $. Thus, each of the elements can be seen to asymptotically contribute a single $ \chi^2_1 $ random variable to the test statistic.

To use Proposition \ref{prop:1} to estimate the signal dimension in the $ p $-dimensional BSS-model \eqref{eq:model}, we sequence together a set of asymptotic tests for the null hypotheses $ H_{00}, H_{01}, \ldots , H_{0(p - 1)} $. Denote the string of $ p $-values produced by these tests by $ (p_0, p_1, \ldots , p_{p - 1}) $ and fix a level of significance $ \alpha $. Different estimates for $ d $ are now obtained by considering the $ p $-values via various strategies. The forward estimate of $ d $ is the smallest $ q $ for which $ p_q \geq \alpha $. The backward estimate of $ d $ is $ q + 1 $ where $ q $ is the largest value for which $ p_q < \alpha $. The divide-and-conquer estimate is obtained by iteratively halving the search interval until a change point from $ < \alpha $ to $ \geq \alpha $ is found.

\section{Theoretical derivations}\label{sec:theory}

Throughout this section, we assume the SOS-model \eqref{eq:model} and a fixed set of lags $ \mathcal{T} = \{ \tau_1, \ldots , \tau_{|\mathcal{T}|} \}$. Moreover, we work under the assumption of identity mixing, $ \boldsymbol{\Omega} = \textbf{I}_p $, which is without loss of generality as SOBI is affine equivariant, meaning that the source estimates do not depend on the value of $ \boldsymbol{\Omega} $ \citep{MiettinenIllnerNordhausenOjaTaskinenTheis2016}. To ensure identifiability of $ \boldsymbol{\Omega} $ we may further set $ \textbf{S}_0 = \mathrm{E}(\textbf{x}_t \textbf{x}_t^\top) = \textbf{I}_p $. We assume a fixed null hypothesis $ H_{0q} $ and denote the number of white noise components by $ r = p - q $.

The population autocovariance matrices are denoted by $ \textbf{S}_\tau = \mathrm{E}(\textbf{x}_t \textbf{x}_{t + \tau}^\top) $ and $ \textbf{R}_\tau = ( \textbf{S}_\tau + \textbf{S}_\tau^\top )/2$, and by the identity mixing and uncorrelatedness of the latent series we have,
\[ 
\textbf{S}_\tau = \textbf{R}_\tau = \textbf{D}_\tau = \begin{pmatrix}
\boldsymbol{\Lambda}_\tau & \textbf{0} \\
\textbf{0} & \textbf{0},
\end{pmatrix},
\]
where $ \boldsymbol{\Lambda}_\tau $ is a $ q \times q $ diagonal matrix, $ \tau \in \mathcal{T} $. The lower right block of the matrix $ \textbf{D}_\tau $ vanishes for all $ \tau \in \mathcal{T} $ as autocovariances of a white noise series are zero. Without loss of generality, we assume that the signals are ordered in $ \textbf{z}_{1t} $ such that the diagonal elements of $ \sum_{\tau \in \mathcal{T}} \boldsymbol{\Lambda}^2_\tau $ are in decreasing order. Moreover, if there are ties, we fix the order by ordering the tied components in decreasing order with respect to the diagonal elements of $ \boldsymbol{\Lambda}_{\tau_1}^2 $. If we still have ties, we order the tied components in decreasing order with respect to the diagonal elements of $ \boldsymbol{\Lambda}_{\tau_2}^2 $ and so on. If after all this we still have tied components, we set them in arbitrary order and note that such sets of tied components have the same autocovariance structure for all lags $ \tau \in \mathcal{T} $, making them indistinguishable by SOBI. However, this just makes the individual signals unestimable and does not affect the estimation of the dimension in any way, as long as Assumption \ref{assu:signal_from_noise} holds.

Partition then the signals into $ v $ groups such that each group consists solely of signals with matching autocovariance structures on all lags $ \tau \in \mathcal{T} $ and such that each pair of distinct groups has a differing autocovariance for at least one lag $ \tau \in \mathcal{T} $. The autocovariance of the $ j $th group for lag $ \tau $ is denoted by $ \lambda_{\tau j} $ and the size of the $ j $th group by $ p_j $, implying that $ p_1 + \cdots p_v = q$. By Assumption \ref{assu:signal_from_noise}, the white noise forms its own group not intersecting with any of the signal groups, and in the following we refer to the noise group with the index $ 0 $, as in $ p_0 = r $ and $ \lambda_{\tau 0} = 0 $ for all $ \tau \in \mathcal{T} $. If $ v = 1 $, all signal components are indistinguishable by their autocovariances and in the other extreme, $ v = q $, no ties occurred in ordering the signals and each signal pair has differing autocovariances for at least one lag $ \tau \in \mathcal{T} $.

We introduce yet one more assumption which is actually implied by Assumption \ref{assu:ma_infinity} and is as such not strictly necessary. However, some of the following auxiliary results are interesting on their own and can be shown to hold under Assumption \ref{assu:consistent}, without the need for Assumption \ref{assu:ma_infinity}.

\begin{assumption}\label{assu:consistent}
	The sample covariance matrix and the sample autocovariance matrices are root-$ T $ consistent, $ \sqrt{T} ( \hat{\textbf{S}}_\tau - \textbf{D}_\tau) = \mathcal{O}_p(1)$, for $ \tau \in \mathcal{T} \cup \{ 0 \} $, where $ \textbf{D}_0 = \textbf{I}_p $.  
\end{assumption}

We begin with a simple linearization result for the standardized autocovariance matrices.  The notation $ \hat{\textbf{H}}_{\tau00}, \hat{\textbf{R}}_{\tau00} $ in Lemma \ref{lem:0} refers to the lower right $ r \times r $ diagonal blocks of the matrices $ \hat{\textbf{H}}_{\tau} = \hat{\textbf{S}}{}_{0}^{-1/2} \hat{\textbf{R}}_{\tau} \hat{\textbf{S}}{}_{0}^{-1/2} $ and $ \hat{\textbf{R}}_{\tau} $. Under $ H_{0q} $ these sub-matrices gather the autocovariances of the noise components.

\begin{lemma}\label{lem:0}
	Under Assumption \ref{assu:consistent} we have
	\[ 
	\hat{\textbf{H}}_\tau = \hat{\textbf{R}}_\tau + \mathcal{O}_p(1/\sqrt{T}), \quad \mbox{for all } \tau \in \mathcal{T}.
	\]
	If $ H_{0q} $ further holds, then,
	\[ 
	\hat{\textbf{H}}_{\tau00} = \hat{\textbf{R}}_{\tau00} + \mathcal{O}_p(1/T), \quad \mbox{for all } \tau \in \mathcal{T}.
	\]
\end{lemma}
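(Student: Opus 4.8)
The plan is to treat $\hat{\textbf{S}}_0^{-1/2}$ as a small perturbation of the identity, expand $\hat{\textbf{H}}_\tau$ accordingly, and track the stochastic orders of the individual blocks once the null hypothesis is imposed. First I would record two consequences of Assumption~\ref{assu:consistent}. Taking $\tau = 0$ with $\textbf{D}_0 = \textbf{I}_p$ gives $\hat{\textbf{S}}_0 = \textbf{I}_p + \mathcal{O}_p(1/\sqrt{T})$; since $\textbf{M} \mapsto \textbf{M}^{-1/2}$ is smooth on the symmetric positive definite matrices and maps $\textbf{I}_p$ to $\textbf{I}_p$, its Lipschitz continuity on a fixed neighborhood of $\textbf{I}_p$ (into which $\hat{\textbf{S}}_0$ falls with probability tending to one) yields $\textbf{A} := \hat{\textbf{S}}_0^{-1/2} - \textbf{I}_p = \mathcal{O}_p(1/\sqrt{T})$. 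Symmetrizing the bound for $\tau \in \mathcal{T}$ gives $\hat{\textbf{R}}_\tau = \textbf{D}_\tau + \mathcal{O}_p(1/\sqrt{T})$, so in particular $\hat{\textbf{R}}_\tau = \mathcal{O}_p(1)$.

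For the first claim I would simply expand
\[
\hat{\textbf{H}}_\tau = (\textbf{I}_p + \textbf{A})\hat{\textbf{R}}_\tau(\textbf{I}_p + \textbf{A}) = \hat{\textbf{R}}_\tau + \textbf{A}\hat{\textbf{R}}_\tau + \hat{\textbf{R}}_\tau\textbf{A} + \textbf{A}\hat{\textbf{R}}_\tau\textbf{A},
\]
and note that each of the three correction terms carries at least one factor $\textbf{A} = \mathcal{O}_p(1/\sqrt{T})$ against otherwise $\mathcal{O}_p(1)$ factors, so that $\hat{\textbf{H}}_\tau - \hat{\textbf{R}}_\tau = \mathcal{O}_p(1/\sqrt{T})$. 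Since $\mathcal{T}$ is finite, this holds uniformly over $\tau \in \mathcal{T}$.

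The substance is the second claim, where the improved rate in the $(00)$-block comes entirely from the null hypothesis. I would partition every matrix into a $2 \times 2$ block structure with a signal block of size $q$ (index $1$) and a noise block of size $r$ (index $0$), so that the population value $\textbf{D}_\tau$ carries only a nonzero $(11)$-block. Under $H_{0q}$ the population $(00)$-, $(01)$- and $(10)$-blocks of $\textbf{R}_\tau$ all vanish, so Assumption~\ref{assu:consistent} forces $\hat{\textbf{R}}_{\tau00}, \hat{\textbf{R}}_{\tau01}, \hat{\textbf{R}}_{\tau10} = \mathcal{O}_p(1/\sqrt{T})$, while only the signal block satisfies $\hat{\textbf{R}}_{\tau11} = \mathcal{O}_p(1)$. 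Reading off the $(00)$-block of the expansion, the two cross terms contribute $\textbf{A}_{01}\hat{\textbf{R}}_{\tau10} + \textbf{A}_{00}\hat{\textbf{R}}_{\tau00}$ and $\hat{\textbf{R}}_{\tau01}\textbf{A}_{10} + \hat{\textbf{R}}_{\tau00}\textbf{A}_{00}$.

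The key point, and the only thing I expect to require care, is that the $\mathcal{O}_p(1)$ signal block $\hat{\textbf{R}}_{\tau11}$ never enters the $(00)$-block of a cross term: landing in that block picks out the rightmost block-column (respectively bottom block-row) of $\hat{\textbf{R}}_\tau$, which under $H_{0q}$ consists solely of the vanishing blocks. Each summand above is therefore a product of two $\mathcal{O}_p(1/\sqrt{T})$ quantities and hence $\mathcal{O}_p(1/T)$, and an analogous count shows the $(00)$-block of $\textbf{A}\hat{\textbf{R}}_\tau\textbf{A}$ is $\mathcal{O}_p(1/T)$ as well (there the signal block, when it appears, is flanked by two off-diagonal blocks of $\textbf{A}$). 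Collecting the blocks gives $\hat{\textbf{H}}_{\tau00} = \hat{\textbf{R}}_{\tau00} + \mathcal{O}_p(1/T)$, again uniformly over the finite set $\mathcal{T}$. The main obstacle is thus purely the block bookkeeping, namely recognizing that the null hypothesis shields the $(00)$-block from the sole $\mathcal{O}_p(1)$ block of $\hat{\textbf{R}}_\tau$, rather than any delicate probabilistic estimate.
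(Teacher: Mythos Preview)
Your proposal is correct and follows essentially the same approach as the paper: establish $\hat{\textbf{S}}_0^{-1/2} - \textbf{I}_p = \mathcal{O}_p(1/\sqrt{T})$, expand $\hat{\textbf{H}}_\tau$ as a perturbation of $\hat{\textbf{R}}_\tau$, and then for the second claim carry out the block bookkeeping to see that under $H_{0q}$ the $\mathcal{O}_p(1)$ signal block of $\hat{\textbf{R}}_\tau$ never contaminates the $(00)$-block beyond order $1/T$. The only cosmetic difference is that the paper derives the bound on $\hat{\textbf{S}}_0^{-1/2} - \textbf{I}_p$ by linearizing the identity $\hat{\textbf{S}}_0^{-1/2}\hat{\textbf{S}}_0\hat{\textbf{S}}_0^{-1/2} = \textbf{I}_p$ and inverting via Slutsky's theorem, whereas you appeal directly to the local Lipschitz continuity of $\textbf{M} \mapsto \textbf{M}^{-1/2}$.
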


Our second auxiliary result shows that, under Assumptions  \ref{assu:signal_from_noise} and \ref{assu:consistent}, the SOBI solution is, while not identifiable, of a very specific asymptotic form (up to permutation). The block division and indexing in Lemma \ref{lem:1} are based on the division of the sources into the $ v + 1 $ groups of equal autocovariances.

\begin{lemma}\label{lem:1}
	Under Assumptions \ref{assu:signal_from_noise},  \ref{assu:consistent} and the null hypothesis $ H_{0q} $, there exists a sequence of permutation matrices $ \hat{\textbf{P}} $ such that,
	\[
	\hat{\textbf{U}} \hat{\textbf{P}} =
	\left[\begin{array}{cccc}
	\cellcolor{black!10}\hat{\textbf{U}}_{11} & \cdots & \hat{\textbf{U}}_{1v} & \hat{\textbf{U}}_{10} \\
	\vdots & \cellcolor{black!10}\ddots & \vdots & \vdots\\
	\hat{\textbf{U}}_{v1} & \cdots & \cellcolor{black!10}\hat{\textbf{U}}_{vv} & \hat{\textbf{U}}_{v0} \\
	\hat{\textbf{U}}_{01} &\cdots & \hat{\textbf{U}}_{0v} & \cellcolor{black!10}\hat{\textbf{U}}_{00}
	\end{array}\right] ,
	\]
	where the diagonal blocks (shaded) satisfy $ \hat{\textbf{U}}_{ii} = \mathcal{O}_p(1)$ and the off-diagonal blocks satisfy $ \hat{\textbf{U}}_{ij} = \mathcal{O}_p(1/\sqrt{T})$.
\end{lemma}

\begin{corollary}\label{cor:1}
	Under the assumptions of Lemma \ref{lem:1}, we have for each $ j = 0, 1, \ldots , v $ that,
	\[ 
	\hat{\textbf{U}}_{jj}^\top \hat{\textbf{U}}_{jj} - \textbf{I}_{p_j} = \mathcal{O}_p(1/T) \quad \mbox{and} \quad \hat{\textbf{U}}_{jj} \hat{\textbf{U}}_{jj}^\top - \textbf{I}_{p_j} = \mathcal{O}_p(1/T).
	\]
\end{corollary}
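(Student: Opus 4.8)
The plan is to combine the orthogonality of the SOBI rotation $\hat{\textbf{U}}$ with the block structure furnished by Lemma \ref{lem:1}. Since $\hat{\textbf{U}}$ is orthogonal and $\hat{\textbf{P}}$ is a permutation matrix, the product $\hat{\textbf{U}}\hat{\textbf{P}}$ is again orthogonal, so that both
\[
(\hat{\textbf{U}}\hat{\textbf{P}})^\top (\hat{\textbf{U}}\hat{\textbf{P}}) = \textbf{I}_p \quad \mbox{and} \quad (\hat{\textbf{U}}\hat{\textbf{P}})(\hat{\textbf{U}}\hat{\textbf{P}})^\top = \textbf{I}_p
\]
hold exactly. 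I would simply read these two identities off block-by-block, using the partition of $\hat{\textbf{U}}\hat{\textbf{P}}$ into the $(v+1)\times(v+1)$ array of blocks from Lemma \ref{lem:1}.

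For the first assertion, equating the $(j,j)$ diagonal block of $(\hat{\textbf{U}}\hat{\textbf{P}})^\top (\hat{\textbf{U}}\hat{\textbf{P}})$ to $\textbf{I}_{p_j}$ gives
\[
\hat{\textbf{U}}_{jj}^\top \hat{\textbf{U}}_{jj} + \sum_{i \neq j} \hat{\textbf{U}}_{ij}^\top \hat{\textbf{U}}_{ij} = \textbf{I}_{p_j},
\]
so that $\hat{\textbf{U}}_{jj}^\top \hat{\textbf{U}}_{jj} - \textbf{I}_{p_j} = -\sum_{i \neq j} \hat{\textbf{U}}_{ij}^\top \hat{\textbf{U}}_{ij}$. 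By Lemma \ref{lem:1} each off-diagonal block obeys $\hat{\textbf{U}}_{ij} = \mathcal{O}_p(1/\sqrt{T})$, whence every summand $\hat{\textbf{U}}_{ij}^\top \hat{\textbf{U}}_{ij}$ is $\mathcal{O}_p(1/T)$, and so is the finite sum; this yields the first bound. The second assertion is entirely symmetric: equating the $(j,j)$ block of $(\hat{\textbf{U}}\hat{\textbf{P}})(\hat{\textbf{U}}\hat{\textbf{P}})^\top$ to $\textbf{I}_{p_j}$ produces $\hat{\textbf{U}}_{jj} \hat{\textbf{U}}_{jj}^\top - \textbf{I}_{p_j} = -\sum_{i \neq j} \hat{\textbf{U}}_{ji} \hat{\textbf{U}}_{ji}^\top$, which is $\mathcal{O}_p(1/T)$ by the same argument applied to the rows of the partition.

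I do not anticipate any genuine obstacle: the corollary is a direct algebraic consequence of orthogonality and the rate bounds of Lemma \ref{lem:1}. The only step requiring a little care is the bookkeeping of block indices, together with the observation that post-multiplication by $\hat{\textbf{P}}$ merely relabels the groups and preserves orthogonality, so that the diagonal blocks of $\hat{\textbf{U}}\hat{\textbf{P}}$ are exactly the $\hat{\textbf{U}}_{jj}$ of Lemma \ref{lem:1}. I would therefore present the argument compactly, emphasizing that it shows the diagonal blocks to be orthogonal up to an $\mathcal{O}_p(1/T)$ perturbation, a fact used repeatedly in the subsequent analysis of $\hat{\textbf{D}}_{\tau q}$.
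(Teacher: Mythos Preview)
Your proposal is correct and follows essentially the same approach as the paper: read off the $(j,j)$ diagonal block of the orthogonality constraints $\hat{\textbf{U}}{}^\top \hat{\textbf{U}} = \textbf{I}_p$ and $\hat{\textbf{U}} \hat{\textbf{U}}{}^\top = \textbf{I}_p$, isolate $\hat{\textbf{U}}_{jj}^\top \hat{\textbf{U}}_{jj} - \textbf{I}_{p_j}$ (respectively $\hat{\textbf{U}}_{jj} \hat{\textbf{U}}_{jj}^\top - \textbf{I}_{p_j}$) as minus a finite sum of products of off-diagonal blocks, and conclude $\mathcal{O}_p(1/T)$ from Lemma~\ref{lem:1}. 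Your explicit handling of the permutation $\hat{\textbf{P}}$ is a small bit of extra care, but otherwise the argument is identical to the paper's.
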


The first $ v $ diagonal blocks in the block matrix of Lemma \ref{lem:1} correspond to the groups of signals that are mutually indistinguishable and the final diagonal block to the $ r $ noise components (which are also indistinguishable from each other). The main implication of Lemma \ref{lem:1} is that the sources within a single group can not be separated by SOBI but the signals coming from two different groups can be, the mixing vanishing at the rate of root-$ T $. In the special case of $ p_j = 1 $, for all $ j = 0, 1, \ldots , v $, Lemma \ref{lem:1} is an instant consequence of \cite[Theorem 1(ii)]{MiettinenIllnerNordhausenOjaTaskinenTheis2016}.  

The next lemma states that our test statistic is under the null asymptotically equivalent to a much simpler quantity, not depending on the estimation of the SOBI-solution $ \hat{\textbf{U}} $.


\begin{lemma}\label{lem:2}
	Under Assumptions \ref{assu:signal_from_noise}, \ref{assu:consistent} and the null hypothesis $ H_{0q} $, we have,
	\[ 
	T \cdot \hat{m}_q = T \cdot \hat{m}^*_q + o_p(1),
	\]
	where
	\[ 
	\hat{m}^*_q = \frac{1}{ |\mathcal{T}| r^2 } \sum_{\tau \in \mathcal{T}} \| \hat{\textbf{R}}_{\tau00} \|^2,
	\]
	and $ \hat{\textbf{R}}_{\tau00} $ is the lower right $ r \times r $ block of $ \hat{\textbf{R}}_{\tau} $.
\end{lemma}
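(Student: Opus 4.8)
The plan is to reduce the test statistic to its noise block and then exploit the near-orthogonality of the corresponding sub-block of the SOBI rotation. First I would fix the identification of $\hat{\textbf{W}}_q$. Assumption \ref{assu:signal_from_noise} guarantees $\sum_{\tau \in \mathcal{T}} \lambda_{\tau j}^2 > 0$ for every signal group $j \geq 1$, whereas $\lambda_{\tau 0} = 0$ for all $\tau$ in the noise group. Combined with Lemma \ref{lem:1} and Lemma \ref{lem:0}, the column sums $\sum_{\tau} \mathrm{diag}(\hat{\textbf{U}}^\top \hat{\textbf{H}}_\tau \hat{\textbf{U}})^2$ attached to noise columns converge to $0$, while those attached to signal columns stay asymptotically bounded away from $0$, so with probability tending to one the ordering underlying $\hat{m}_q$ places the $r$ noise columns last. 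Hence, up to a column permutation that leaves $\| \hat{\textbf{D}}_{\tau q} \|$ unchanged (it conjugates $\hat{\textbf{D}}_{\tau q}$ by a permutation matrix), $\hat{\textbf{W}}_q$ coincides with the last block column of $\hat{\textbf{U}} \hat{\textbf{P}}$ from Lemma \ref{lem:1}. Writing its rows in the signal/noise split gives $\hat{\textbf{W}}_q = (\hat{\textbf{W}}_{q1}^\top, \hat{\textbf{U}}_{00}^\top)^\top$, where $\hat{\textbf{W}}_{q1}$ collects the off-diagonal blocks $\hat{\textbf{U}}_{j0}$, $j = 1, \ldots, v$, so $\hat{\textbf{W}}_{q1} = \mathcal{O}_p(1/\sqrt{T})$, while $\hat{\textbf{U}}_{00} = \mathcal{O}_p(1)$ is asymptotically orthogonal by Corollary \ref{cor:1}.

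Next I would expand $\hat{\textbf{D}}_{\tau q} = \hat{\textbf{W}}_q^\top \hat{\textbf{H}}_\tau \hat{\textbf{W}}_q$ into the four blocks obtained by partitioning $\hat{\textbf{H}}_\tau$ along the same signal/noise split. Under identity mixing and $H_{0q}$ the population symmetrized autocovariance is $\textbf{D}_\tau = \mathrm{diag}(\boldsymbol{\Lambda}_\tau, \textbf{0})$, so Assumption \ref{assu:consistent} gives $\hat{\textbf{R}}_\tau = \textbf{D}_\tau + \mathcal{O}_p(1/\sqrt{T})$; in particular the signal block of $\hat{\textbf{R}}_\tau$ is $\mathcal{O}_p(1)$ while its cross and noise blocks are $\mathcal{O}_p(1/\sqrt{T})$. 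The first part of Lemma \ref{lem:0} transfers these rates to $\hat{\textbf{H}}_\tau$. Combining the block rates of $\hat{\textbf{H}}_\tau$ with those of $\hat{\textbf{W}}_q$ shows that the three blocks involving $\hat{\textbf{W}}_{q1}$ are each $\mathcal{O}_p(1/T)$, leaving the single dominant term $\hat{\textbf{U}}_{00}^\top \hat{\textbf{H}}_{\tau00} \hat{\textbf{U}}_{00} = \mathcal{O}_p(1/\sqrt{T})$. The second, sharper part of Lemma \ref{lem:0} then lets me replace $\hat{\textbf{H}}_{\tau00}$ by $\hat{\textbf{R}}_{\tau00}$ at the cost of an $\mathcal{O}_p(1/T)$ term, yielding $\hat{\textbf{D}}_{\tau q} = \hat{\textbf{U}}_{00}^\top \hat{\textbf{R}}_{\tau00} \hat{\textbf{U}}_{00} + \mathcal{O}_p(1/T)$.

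Finally I would pass to squared Frobenius norms. Because $\hat{\textbf{U}}_{00}^\top \hat{\textbf{U}}_{00} = \textbf{I}_r + \mathcal{O}_p(1/T)$ and $\hat{\textbf{U}}_{00} \hat{\textbf{U}}_{00}^\top = \textbf{I}_r + \mathcal{O}_p(1/T)$, writing $\| \hat{\textbf{U}}_{00}^\top \hat{\textbf{R}}_{\tau00} \hat{\textbf{U}}_{00} \|^2$ as a trace and inserting these two identities shows it equals $\| \hat{\textbf{R}}_{\tau00} \|^2$ up to a relative error of order $\mathcal{O}_p(1/T)$; since $\| \hat{\textbf{R}}_{\tau00} \|^2 = \mathcal{O}_p(1/T)$ under the null, the absolute error is only $\mathcal{O}_p(1/T^2)$. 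Expanding the full square and bounding the cross term by Cauchy--Schwarz (of order $\mathcal{O}_p(1/\sqrt{T}) \cdot \mathcal{O}_p(1/T)$) and the remainder ($\mathcal{O}_p(1/T^2)$) then gives $\| \hat{\textbf{D}}_{\tau q} \|^2 = \| \hat{\textbf{R}}_{\tau00} \|^2 + \mathcal{O}_p(1/T^{3/2})$. Summing over the finitely many $\tau \in \mathcal{T}$, dividing by $|\mathcal{T}| r^2$ and multiplying by $T$ produces $T \hat{m}_q = T \hat{m}^*_q + \mathcal{O}_p(1/\sqrt{T})$, which is the claim. The main obstacle is bookkeeping rather than any single hard estimate: the statistic is quadratic, so I must verify that both the off-diagonal mixing in $\hat{\textbf{W}}_q$ and the deviation of $\hat{\textbf{U}}_{00}$ from exact orthogonality contribute only beyond the $1/T$ scale. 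This works precisely because under $H_{0q}$ the noise block $\hat{\textbf{R}}_{\tau00}$ is itself $\mathcal{O}_p(1/\sqrt{T})$, so that every product with another root-$T$-small quantity is genuinely higher order and vanishes after multiplication by $T$.
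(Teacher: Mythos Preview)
Your proposal is correct and follows essentially the same route as the paper's proof: identify the noise columns via the pseudo-eigenvalue ordering (the paper formalizes this by conditioning on an event $A_q$ with $\mathbb{P}(A_q)\to 1$, but the content is the same), expand $\hat{\textbf{W}}_q^\top \hat{\textbf{H}}_\tau \hat{\textbf{W}}_q$ blockwise using the rates from Lemma~\ref{lem:1} to isolate $\hat{\textbf{U}}_{00}^\top \hat{\textbf{H}}_{\tau00} \hat{\textbf{U}}_{00}$, replace $\hat{\textbf{H}}_{\tau00}$ by $\hat{\textbf{R}}_{\tau00}$ via Lemma~\ref{lem:0}, and remove $\hat{\textbf{U}}_{00}$ via Corollary~\ref{cor:1} inside the trace. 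The only cosmetic difference is that the paper first derives the general diagonal-block expansion \eqref{eq:pseudo_expansion} for all groups $i$ (needed anyway to justify the ordering) and then specializes to $i=0$, whereas you argue the ordering and the $i=0$ expansion in parallel; the estimates and their orders are identical.
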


To compute the limiting distribution of the proxy $ \hat{m}_q^* $ we next show that the joint limiting distribution of the blocks $ \hat{\textbf{R}}_{\tau00} $ is under Assumption~\ref{assu:ma_infinity} and $ H_{0q} $ conveniently a multivariate normal distribution. The result is a slight modification of \cite[Lemma 1]{MiettinenIllnerNordhausenOjaTaskinenTheis2016}. In the statement of Lemma \ref{lem:3}, $ \textbf{J}_r $ denotes the $ r \times r $ matrix filled with ones and $ \textbf{E}_r^{i j} $ denotes the $ r \times r $ matrix filled otherwise with zeroes but with a single one as the $ (i, j) $th element.

\begin{lemma}\label{lem:3}
	Under Assumption \ref{assu:ma_infinity} and the null hypothesis $ H_{0q} $, the blocks $  \hat{\textbf{R}}_{\tau_100}, \ldots , \hat{\textbf{R}}_{\tau_{|\mathcal{T}|}00} $ have a joint limiting normal distribution,
	\[ 
	\sqrt{T} \mathrm{vec} \left( \hat{\textbf{R}}_{\tau_100}, \ldots , \hat{\textbf{R}}_{\tau_{|\mathcal{T}|}00} \right) \rightsquigarrow \mathcal{N}_{|\mathcal{T}| r^2} (\textbf{0}, \textbf{V}),
	\]
	where $ \mathrm{vec} $ is the column-vectorization operator,
	\[ 
	\textbf{V} = \begin{pmatrix}
	\textbf{V}_0 & \textbf{0} & \cdots & \textbf{0} \\
	\textbf{0} & \textbf{V}_0 & \cdots & \textbf{0} \\
	\vdots & \vdots & \ddots & \vdots \\
	\textbf{0} & \textbf{0} & \cdots & \textbf{V}_0
	\end{pmatrix} \in \mathbb{R}^{|\mathcal{T}| r^2 \times |\mathcal{T}| r^2},
	\]
	and $ \textbf{V}_0 = \mathrm{diag}(\mathrm{vec}(\textbf{J}_{r} + \textbf{I}_{r})/2) (\textbf{K}_{rr} - \textbf{D}_{rr} + \textbf{I}_{r^2} )$ where $ \textbf{K}_{rr} = \sum_{i=1}^r \sum_{j=1}^r \textbf{E}_r^{ij} \otimes \textbf{E}_r^{ji}$ and $ \textbf{D}_{rr} = \sum_{i=1}^r \textbf{E}_r^{ii} \otimes \textbf{E}_r^{ii} $. 
\end{lemma}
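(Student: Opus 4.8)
The plan is to reduce the noise block $\hat{\textbf{R}}_{\tau00}$ to a sum of products of the iid innovations, to establish joint asymptotic normality by a Cram\'er--Wold argument combined with a central limit theorem for $m$-dependent sequences, and finally to read off the covariance by a direct moment computation. Since the statement is advertised as a slight modification of \cite[Lemma 1]{MiettinenIllnerNordhausenOjaTaskinenTheis2016}, one legitimate shortcut is to invoke their joint limit theorem for the standardized autocovariances and simply restrict it to the lower-right block under $H_{0q}$; I sketch instead the self-contained argument, which has the advantage of exposing \emph{why} the limiting covariance is free of the fourth moments of the noise.

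First I would exploit the reductions already available under identity mixing and $H_{0q}$. By Assumption~\ref{assu:ma_infinity} the diagonal matrices $\boldsymbol{\Psi}_j$ have noise block $\delta_{j0}\textbf{I}_r$, so the last $r$ coordinates of $\textbf{z}_t=\textbf{x}_t$ coincide exactly with the corresponding coordinates of the iid innovations $\boldsymbol{\epsilon}_t$; write $\boldsymbol{\eta}_t$ for this $r$-vector, which satisfies $\mathrm{E}(\boldsymbol{\eta}_t\boldsymbol{\eta}_t^\top)=\textbf{I}_r$. The centering by $\bar{\textbf{x}}$ and the normalization $1/(T-\tau)$ rather than $1/T$ each perturb $\sqrt{T}\hat{\textbf{R}}_{\tau00}$ only by $o_p(1)$: the mean is $\mathcal{O}_p(1/\sqrt{T})$ and enters quadratically, while $1/(T-\tau)-1/T=\mathcal{O}(T^{-2})$ multiplies a sum of order $\mathcal{O}_p(\sqrt{T})$. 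Hence it suffices to study the symmetrized, uncentered matrices with entries $(1/2T)\sum_t(\eta_{it}\eta_{j,t+\tau}+\eta_{jt}\eta_{i,t+\tau})$ over the finitely many lags $\tau\in\mathcal{T}$.

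Next I would prove the joint normality. Fixing arbitrary coefficients, form the scalar $L=\sum_{\tau\in\mathcal{T}}\sum_{i,j}c_{\tau ij}\,\sqrt{T}\,(\hat{\textbf{R}}_{\tau00})_{ij}$. Each summand is a bilinear function of innovations at times separated by at most $m=\max_{\tau\in\mathcal{T}}\tau$, so the underlying array is a zero-mean, strictly stationary, $m$-dependent sequence with finite variance guaranteed by the finite fourth moments in Assumption~\ref{assu:ma_infinity}. The Hoeffding--Robbins central limit theorem for $m$-dependent sequences then gives $L\rightsquigarrow\mathcal{N}(0,\sigma^2)$, and the Cram\'er--Wold device upgrades this to the joint normality of $\sqrt{T}\,\mathrm{vec}(\hat{\textbf{R}}_{\tau_100},\ldots,\hat{\textbf{R}}_{\tau_{|\mathcal{T}|}00})$; the limit is centered because $\mathrm{E}(\eta_{it}\eta_{j,t+\tau})=0$ for every $\tau\neq0$.

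Finally I would compute the limiting covariance entrywise and match it to $\textbf{V}_0$. Using the independence of the innovations across time, the limit of $T\,\mathrm{Cov}$ between the $(i,j)$ entry at lag $\tau$ and the $(k,l)$ entry at lag $\sigma$ is nonzero only when the four time indices pair up; since $\tau,\sigma\neq0$ the sole admissible pairing forces $\tau=\sigma$, which is precisely what delivers the block-diagonal form of $\textbf{V}$. In that case the two surviving pairs sit at distinct times and factor into second moments $\mathrm{E}(\boldsymbol{\eta}\boldsymbol{\eta}^\top)=\textbf{I}_r$, so the fourth (and cross-fourth) moments never enter, and carrying the symmetrization through yields the limiting covariance $\tfrac{1}{2}(\delta_{ik}\delta_{jl}+\delta_{il}\delta_{jk})$. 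In $\mathrm{vec}$ coordinates the pieces $\delta_{ik}\delta_{jl}$ and $\delta_{il}\delta_{jk}$ are exactly the entries of $\textbf{I}_{r^2}$ and of the commutation matrix $\textbf{K}_{rr}$, the factor $\mathrm{diag}(\mathrm{vec}(\textbf{J}_r+\textbf{I}_r)/2)$ supplies weight $\tfrac12$ off the diagonal and weight $1$ on it, and subtracting $\textbf{D}_{rr}$ removes the double count the two symmetrization terms create at the positions $i=j=k=l$, reproducing $\textbf{V}_0$ exactly. The main obstacle I anticipate is not any single estimate but the careful bookkeeping of the covariance: one must rigorously verify that the overlapping lag windows produce a genuine $m$-dependent array with summable covariances, that all cross-lag contributions vanish, and that the index identity translates faithfully into the stated Kronecker form; the conceptual crux is the observation that nonzero lags isolate the two pairs at separate times, which is what strips the fourth moments out of the limit.
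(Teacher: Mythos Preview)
Your proposal is correct and the covariance computation checks out: the symmetrized entries indeed converge with limiting covariance $\tfrac{1}{2}(\delta_{ik}\delta_{jl}+\delta_{il}\delta_{jk})$, which equals the $(ij,kl)$ entry of $(\textbf{K}_{rr}+\textbf{I}_{r^2})/2$, the simplified form of $\textbf{V}_0$.

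The paper takes a different route. Rather than arguing directly from the iid structure of the noise innovations and an $m$-dependent CLT, it invokes \cite[Lemma 1]{MiettinenIllnerNordhausenOjaTaskinenTheis2016} for the full $\mathrm{MA}(\infty)$ setting, which yields the joint normality together with a general formula for the blocks $\textbf{V}_{\ell m}$ in terms of matrices $\textbf{F}_\ell=\sum_j\boldsymbol{\psi}_j\boldsymbol{\psi}_{j+\ell}^\top$ and the fourth moments $\beta_i,\beta_{ij}$ of the innovations; it then specializes by plugging in $\boldsymbol{\psi}_j=\delta_{j0}\boldsymbol{1}_r$, so that $\textbf{F}_\ell=\delta_{\ell 0}\textbf{J}_r$, and verifies that the fourth-moment terms carry a $\delta_{\ell 0}$ factor and hence vanish for all $\ell\in\{1,\ldots,|\mathcal{T}|\}$. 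Your direct argument is more elementary and makes transparent \emph{why} the fourth moments disappear (nonzero lags force the two index pairs to live at distinct times, so only second moments survive), whereas the paper's approach buys generality at the cost of importing a heavier result and then simplifying. Both arrive at the same block-diagonal $\textbf{V}$ with identical diagonal blocks $\textbf{V}_0$.
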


Lemmas \ref{lem:2} and \ref{lem:3} now combine to establish the limiting distribution of the test statistic to be the remarkably simple chi-squared distribution, see Proposition \ref{prop:1} in Section \ref{sec:main}.


\section{Simulations}\label{sec:simulations}

The following results are all obtained in R \citep{R} using the packages JADE \citep{JADE} and tsBSS \citep{tsBSS}.

\subsection{Evaluation of the hypothesis testing}\label{sec:simulations_1}

In the first set of simulations we consider the performance of the hypothesis tests. As our competitor we use the recommended and most general non-parametric bootstrapping strategy from  \citet{MatilainenNordhausenVirta2017}, which takes bootstrap samples from the hypothetical multivariate noise part. The number of bootstrap samples used was 200. We computed also the three other bootstrapping strategies as listed in \citet{MatilainenNordhausenVirta2017}, but as the results were basically the same, we report for simplicity only the strategy mentioned above.

We considered three different settings for the latent sources:

\begin{description}
	\item[Setting H1:] MA(3), AR(2) and ARMA(1,1) having Gaussian innovations together with two Gaussian white noise components.
	\item[Setting H2:] MA(10), MA(15) and M(20) processes having Gaussian innovations together with two Gaussian white noise components.
	\item[Setting H3:] Three MA(3) processes having Gaussian innovations and identical autocovariance functions together with two Gaussian white noise processes.
\end{description}

Hence, in all three settings the signal dimension is $d=3$ and the total dimension is $p=5$. Due to affine equivariance of the used methods, we take without loss of generality $\bs \Omega = \bo I_5$. In general, setting H1 can be considered a short range dependence model and H2 a long range dependence model. Setting H3 is special in that the methods should not be able to separate its signals, but they should still be able to separate the noise space from the signal space.

Based on 2000 repetitions, we give the rejection frequencies of the null hypotheses $ H_{02} $, $ H_{03} $ and $ H_{04} $ at level $\alpha=0.05$ in Tables~\ref{tab:H1d4}-\ref{tab:H3d2}. We considered three different BSS-estimators, AMUSE with $\tau=1$, SOBI with $\mathcal{T}=\{1,\ldots,6\}$ (denoted SOBI6) and 
SOBI with $\mathcal{T}=\{1,\ldots,12\}$ (denoted SOBI12). The optimal rejection rates at level $ \alpha = 0.05 $ are $ 1.00 $ for $ H_{02} $, $ 0.05 $ for $ H_{03} $ and $ < 0.05 $ for $ H_{04} $.

\begin{longtable}[]{@{}rrrrrrr@{}}
		\caption{Rejection frequencies of $H_{02}$ in Setting H1 at level $\alpha=0.05$.}
	\label{tab:H1d4}\tabularnewline
	\toprule
	& \multicolumn{2}{c}{AMUSE}  & \multicolumn{2}{c}{SOBI6} & \multicolumn{2}{c}{SOBI12}\tabularnewline \cmidrule(l){2-3}  \cmidrule(l){4-5} \cmidrule(l){6-7}
		n & Asymp & Boot & Asymp & Boot & Asymp & Boot\tabularnewline
	\midrule
	\endfirsthead
	200 & 1.000 & 1.000 & 1.000 & 0.999 & 0.998 & 0.998\tabularnewline
	500 & 1.000 & 1.000 & 1.000 & 1.000 & 1.000 & 1.000\tabularnewline
	1000 & 1.000 & 1.000 & 1.000 & 1.000 & 1.000 & 1.000\tabularnewline
	2000 & 1.000 & 1.000 & 1.000 & 1.000 & 1.000 & 1.000\tabularnewline
	5000 & 1.000 & 1.000 & 1.000 & 1.000 & 1.000 & 1.000\tabularnewline
	\bottomrule
\end{longtable}

\begin{longtable}[]{@{}rcccccc@{}}
	\caption{Rejection frequencies when testing \(H_{03}\) in Setting H1 at level $\alpha=0.05$.}
	\label{tab:H1d3}\tabularnewline
	\toprule
	& \multicolumn{2}{c}{AMUSE}  & \multicolumn{2}{c}{SOBI6} & \multicolumn{2}{c}{SOBI12}\tabularnewline \cmidrule(l){2-3}  \cmidrule(l){4-5} \cmidrule(l){6-7}
		n & Asymp & Boot & Asymp & Boot & Asymp & Boot\tabularnewline
	\midrule
	\endfirsthead
	200 & 0.059 & 0.050 & 0.078 & 0.050 & 0.102 & 0.050\tabularnewline
	500 & 0.053 & 0.048 & 0.064 & 0.049 & 0.071 & 0.052\tabularnewline
	1000 & 0.048 & 0.047 & 0.059 & 0.053 & 0.054 & 0.050\tabularnewline
	2000 & 0.050 & 0.054 & 0.048 & 0.049 & 0.054 & 0.046\tabularnewline
	5000 & 0.048 & 0.052 & 0.052 & 0.047 & 0.056 & 0.053\tabularnewline
	\bottomrule
\end{longtable}

\begin{longtable}[]{@{}rcccccc@{}}
	\caption{Rejection frequencies when testing $H_{04}$ in Setting H1 at level $\alpha=0.05$.}
\label{tab:H1d2}\tabularnewline
	\toprule
	& \multicolumn{2}{c}{AMUSE}  & \multicolumn{2}{c}{SOBI6} & \multicolumn{2}{c}{SOBI12}\tabularnewline \cmidrule(l){2-3}  \cmidrule(l){4-5} \cmidrule(l){6-7}
		n & Asymp & Boot & Asymp & Boot & Asymp & Boot\tabularnewline
	\midrule
	\endfirsthead
	200 & 0.006 & 0.008 & 0.015 & 0.006 & 0.024 & 0.004\tabularnewline
	500 & 0.006 & 0.007 & 0.009 & 0.004 & 0.016 & 0.006\tabularnewline
	1000 & 0.007 & 0.010 & 0.012 & 0.005 & 0.012 & 0.006\tabularnewline
	2000 & 0.003 & 0.006 & 0.008 & 0.003 & 0.009 & 0.002\tabularnewline
	5000 & 0.006 & 0.006 & 0.006 & 0.002 & 0.008 & 0.004\tabularnewline
	\bottomrule
\end{longtable}

\begin{longtable}[]{@{}rcccccc@{}}
		\caption{Rejection frequencies when testing \(H_{02}\) in Setting H2 at level $\alpha=0.05$.}
	\label{tab:H2d4}\tabularnewline
	\toprule
	& \multicolumn{2}{c}{AMUSE}  & \multicolumn{2}{c}{SOBI6} & \multicolumn{2}{c}{SOBI12}\tabularnewline \cmidrule(l){2-3}  \cmidrule(l){4-5} \cmidrule(l){6-7}
		n & Asymp & Boot & Asymp & Boot & Asymp & Boot\tabularnewline
	\midrule
	\endfirsthead
	200 & 0.038 & 0.043 & 0.608 & 0.484 & 0.911 & 0.848\tabularnewline
	500 & 0.090 & 0.094 & 0.988 & 0.987 & 1.000 & 1.000\tabularnewline
	1000 & 0.190 & 0.189 & 1.000 & 1.000 & 1.000 & 1.000\tabularnewline
	2000 & 0.252 & 0.256 & 1.000 & 1.000 & 1.000 & 1.000\tabularnewline
	5000 & 0.558 & 0.550 & 1.000 & 1.000 & 1.000 & 1.000\tabularnewline
	\bottomrule
\end{longtable}

\begin{longtable}[]{@{}rcccccc@{}}
	\caption{Rejection frequencies when testing \(H_{03}\) in Setting H2 at level $\alpha=0.05$.}
	\label{tab:H2d3}\tabularnewline
	\toprule
	& \multicolumn{2}{c}{AMUSE}  & \multicolumn{2}{c}{SOBI6} & \multicolumn{2}{c}{SOBI12}\tabularnewline \cmidrule(l){2-3}  \cmidrule(l){4-5} \cmidrule(l){6-7}
	n & Asymp & Boot & Asymp & Boot & Asymp & Boot\tabularnewline
	\midrule
	\endfirsthead
	200 & 0.002 & 0.006 & 0.125 & 0.050 & 0.148 & 0.063\tabularnewline
	500 & 0.008 & 0.014 & 0.075 & 0.041 & 0.074 & 0.050\tabularnewline
	1000 & 0.010 & 0.014 & 0.067 & 0.046 & 0.068 & 0.047\tabularnewline
	2000 & 0.020 & 0.024 & 0.056 & 0.052 & 0.066 & 0.061\tabularnewline
	5000 & 0.031 & 0.039 & 0.051 & 0.048 & 0.054 & 0.047\tabularnewline
	\bottomrule
\end{longtable}

\begin{longtable}[]{@{}rcccccc@{}}
	\caption{Rejection frequencies when testing \(H_{04}\) in Setting H2 at level $\alpha=0.05$.}
	\label{tab:H2d2}\tabularnewline
	\toprule
	& \multicolumn{2}{c}{AMUSE}  & \multicolumn{2}{c}{SOBI6} & \multicolumn{2}{c}{SOBI12}\tabularnewline \cmidrule(l){2-3}  \cmidrule(l){4-5} \cmidrule(l){6-7}
	n & Asymp & Boot & Asymp & Boot & Asymp & Boot\tabularnewline
	\midrule
	\endfirsthead
	200 & 0.000 & 0.001 & 0.034 & 0.004 & 0.039 & 0.006\tabularnewline
	500 & 0.002 & 0.004 & 0.010 & 0.004 & 0.016 & 0.007\tabularnewline
	1000 & 0.000 & 0.004 & 0.012 & 0.004 & 0.007 & 0.001\tabularnewline
	2000 & 0.002 & 0.004 & 0.010 & 0.004 & 0.010 & 0.003\tabularnewline
	5000 & 0.004 & 0.008 & 0.010 & 0.005 & 0.007 & 0.003\tabularnewline
	\bottomrule
\end{longtable}

\begin{longtable}[]{@{}rcccccc@{}}
	\caption{Rejection frequencies when testing \(H_{02}\) in Setting H3 at level $\alpha=0.05$.}
	\label{tab:H3d4}\tabularnewline
	\toprule
	& \multicolumn{2}{c}{AMUSE}  & \multicolumn{2}{c}{SOBI6} & \multicolumn{2}{c}{SOBI12}\tabularnewline \cmidrule(l){2-3}  \cmidrule(l){4-5} \cmidrule(l){6-7}
	n & Asymp & Boot & Asymp & Boot & Asymp & Boot\tabularnewline
	\midrule
	\endfirsthead
	200 & 0.036 & 0.042 & 0.600 & 0.479 & 0.906 & 0.84\tabularnewline
	500 & 0.084 & 0.092 & 0.986 & 0.987 & 1.000 & 1.00\tabularnewline
	1000 & 0.168 & 0.175 & 1.000 & 1.000 & 1.000 & 1.00\tabularnewline
	2000 & 0.279 & 0.272 & 1.000 & 1.000 & 1.000 & 1.00\tabularnewline
	5000 & 0.576 & 0.568 & 1.000 & 1.000 & 1.000 & 1.00\tabularnewline
	\bottomrule
\end{longtable}

\begin{longtable}[]{@{}rcccccc@{}}
	\caption{Rejection frequencies when testing \(H_{03}\) in Setting H3 at level $\alpha=0.05$.}
	\label{tab:H3d3}\tabularnewline
	\toprule
	& \multicolumn{2}{c}{AMUSE}  & \multicolumn{2}{c}{SOBI6} & \multicolumn{2}{c}{SOBI12}\tabularnewline \cmidrule(l){2-3}  \cmidrule(l){4-5} \cmidrule(l){6-7}
	n & Asymp & Boot & Asymp & Boot & Asymp & Boot\tabularnewline
	\midrule
	\endfirsthead
	200 & 0.004 & 0.005 & 0.122 & 0.049 & 0.146 & 0.047\tabularnewline
	500 & 0.006 & 0.008 & 0.075 & 0.043 & 0.074 & 0.057\tabularnewline
	1000 & 0.010 & 0.018 & 0.062 & 0.050 & 0.062 & 0.055\tabularnewline
	2000 & 0.016 & 0.023 & 0.058 & 0.044 & 0.046 & 0.054\tabularnewline
	5000 & 0.034 & 0.042 & 0.051 & 0.050 & 0.048 & 0.045\tabularnewline
	\bottomrule
\end{longtable}

\begin{longtable}[]{@{}rcccccc@{}}
	\caption{Rejection frequencies when testing \(H_{04}\) in Setting H3 at level $\alpha=0.05$.}
	\label{tab:H3d2}\tabularnewline
	\toprule
	& \multicolumn{2}{c}{AMUSE}  & \multicolumn{2}{c}{SOBI6} & \multicolumn{2}{c}{SOBI12}\tabularnewline \cmidrule(l){2-3}  \cmidrule(l){4-5} \cmidrule(l){6-7}
	n & Asymp & Boot & Asymp & Boot & Asymp & Boot\tabularnewline
	\midrule
	\endfirsthead
	200 & 0.000 & 0.002 & 0.026 & 0.005 & 0.034 & 0.006\tabularnewline
	500 & 0.000 & 0.002 & 0.012 & 0.003 & 0.012 & 0.003\tabularnewline
	1000 & 0.002 & 0.002 & 0.010 & 0.005 & 0.010 & 0.005\tabularnewline
	2000 & 0.004 & 0.006 & 0.008 & 0.007 & 0.006 & 0.004\tabularnewline
	5000 & 0.005 & 0.010 & 0.008 & 0.004 & 0.006 & 0.003\tabularnewline
	\bottomrule
\end{longtable}

The results of the simulations can be summarized as follows. (i) There is no big difference between the limiting theory and the bootstrap tests, which is a clear advantage for the asymptotic test as neither a bootstrapping strategy has to be selected nor is the asymptotic test computationally demanding. (ii) The number of matrices to be diagonalized seems to matter. If the dependence structure is of a short range AMUSE works well, but it seems to struggle in the case of long range dependence. In the considered settings SOBI with 6 matrices seems to be a good compromise. (iii) Even when the signals cannot be individually separated, the noise and signal subspaces can be separated very accurately.

In general, having a very good power under the alternative hypotheses of too large noise subspaces is desirable when using successive testing strategies to estimate the dimension. This was not yet evaluated in \citet{MatilainenNordhausenVirta2017} and will be done in the next section. 

\subsection{Evaluation of determining the dimension of the signal}

In this section we evaluate in a simulation study the performance of our test when the goal is to estimate the signal dimension $d$. Several different testing strategies are possible, as described in the end of Section \ref{sec:main}. We will use in the following the divide-and-conquer strategy as it seems the most practical. For simplicity, all tests will be performed at the level $\alpha=0.05$.

As competitors we use again the bootstrap tests, this time including all three nonparametric bootstraps and the parametric bootstrap. For details we refer to \citet{MatilainenNordhausenVirta2017}. As an additional contender we use the ladle estimator as described in \citet{NordhausenVirta2018}. Also for the ladle different bootstrap strategies are possible and we consider the fixed block bootstrap with the block lengths 20 and 40 and the stationary block bootstrap with the expected block lengths 20 and 40, see \citet{NordhausenVirta2018} for details. For all bootstrap-based methods the number of bootstrapping samples is again 200 and, as in the previous section, we consider the three estimators, AMUSE, SOBI6 and SOBI12.

The settings considered in this simulation are: 
\begin{description}
	\item[Setting D1:] AR(2), AR(3), ARMA(1,1), ARMA(3,2) and MA(3) processes having Gaussian innovations  together with 5 Gaussian white noise components.
	\item[Setting D2:] Same processes as in D1 but the MA(3) is changed to an MA(1) process with the parameter equal to 0.1.
	\item[Setting D3:] Five MA(2) processes with parameters (0.1, 0.1) having Gaussian innovations together with 5 Gaussian white noise processes.
\end{description}

Hence, in all settings $p=10$ and $d=5$. Setting D1 is the basic setting whereas in Setting D2 there is one very weak signal. In Setting D3 all five signals come from identical processes and exhibit weak dependence. As in the previous simulation, the mixing matrix used is $\bs \Omega = \bo I_{10}$ and Figures~\ref{fig:Est_k_1}-\ref{fig:Est_k_3} show, based on 2000 repetitions, the frequencies of the estimated signal dimensions.

\begin{figure}[t!]
	\centering
	\includegraphics[width=0.99\textwidth]{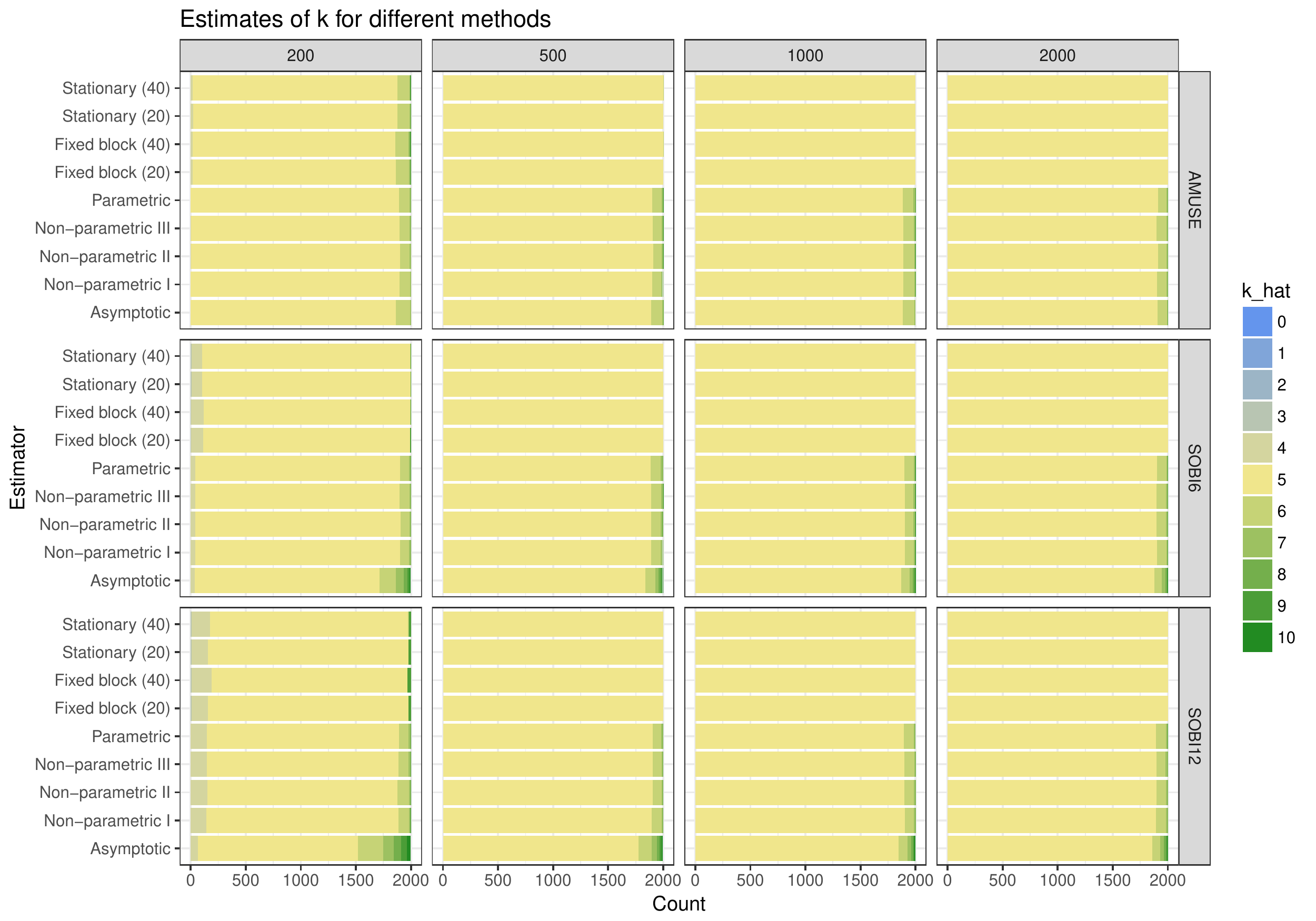}
	\caption{Estimating $k$ by divide-and-conquer in Setting D1.}
	\label{fig:Est_k_1}
\end{figure}

\begin{figure}[t!]
	\centering
	\includegraphics[width=0.99\textwidth]{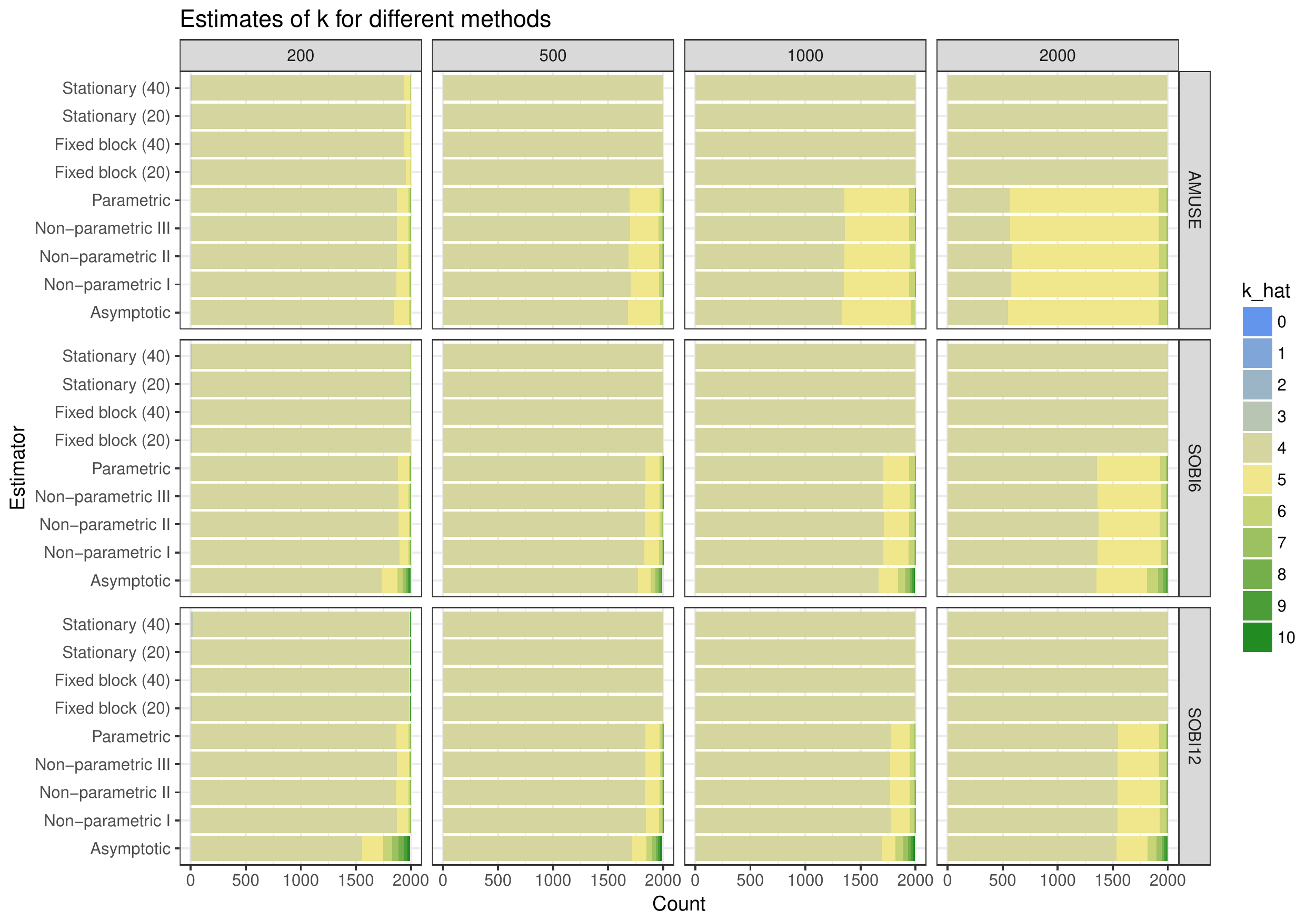}
	\caption{Estimating $k$ by divide-and-conquer in Setting D2.}
	\label{fig:Est_k_2}
\end{figure}

\begin{figure}[t!]
	\centering
	\includegraphics[width=0.99\textwidth]{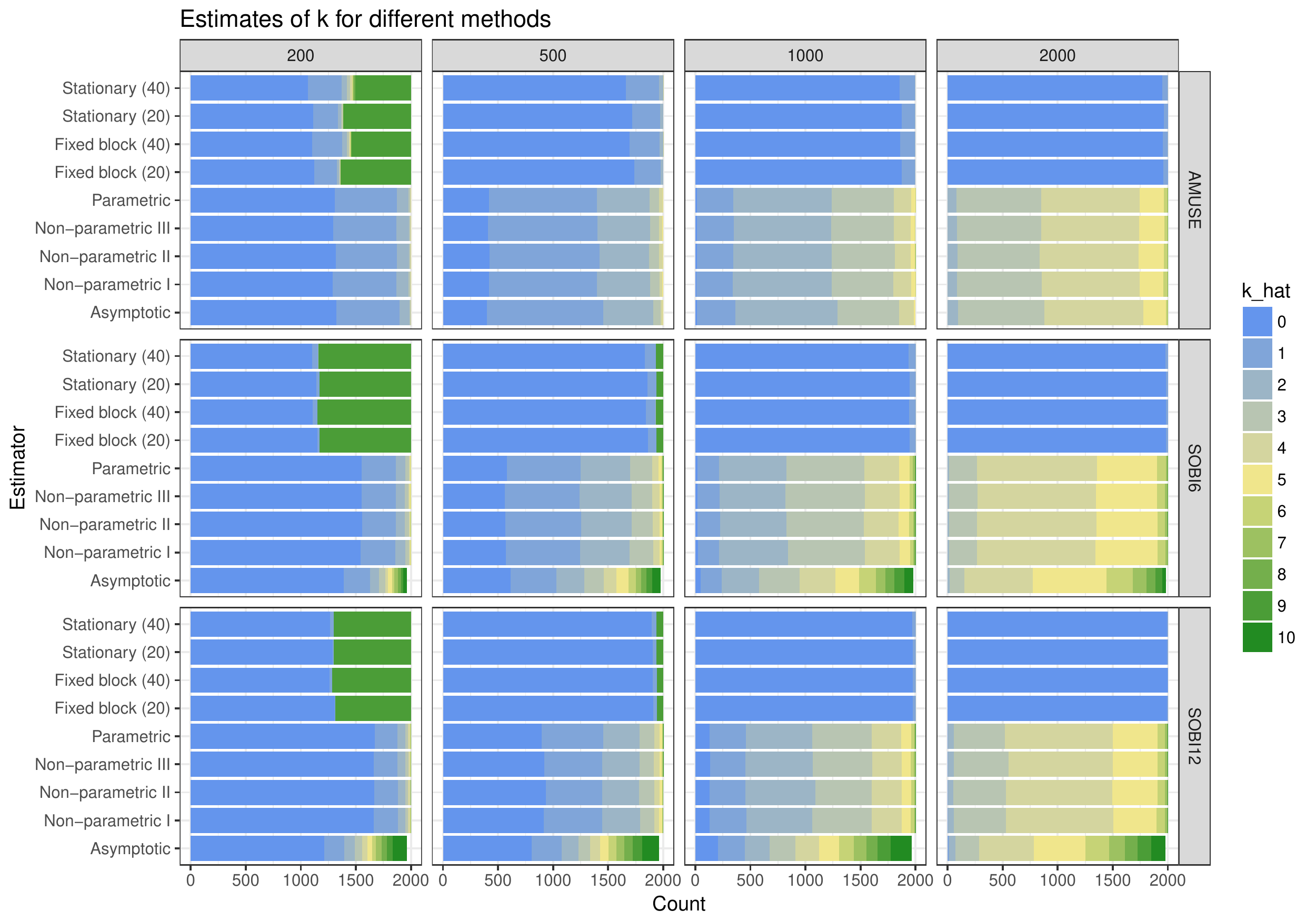}
	\caption{Estimating $k$ by divide-and-conquer in Setting D3.}
	\label{fig:Est_k_3}
\end{figure}

In Setting D1, the asymptotic test seems not to work as well as the other methods for small samples but in general the difference to the bootstrap-based testing procedures is negligible. In general, the ladle is the most preferable option. In setting D2, on the other hand, ladle consistently underestimates the signal dimension and is not able to find the weak signal. When using the hypothesis testing-based methods also the weak signal is identified with increasing sample size. However, the more scatter matrices we estimate, the more difficult the estimation gets and thus AMUSE works the best.

In Setting D3 the ladle fails completely and keeps getting worse with increasing sample size. The difference between bootstrapping and asymptotic testing is at its largest in this setting, the asymptotic test seems to be the most preferable option. As two lags are needed to capture all the temporal information, AMUSE is at an disadvantage in this setting, and this is clearly visible in the plots. Also, SOBI6 seems to exploit the lag information better than SOBI12, possibly because it avoids the inclusion of several unnecessary autocovariance matrices in the estimation.

\section{Data example}\label{sec:data_example}

For our real data example we use the recordings of three sounds signals available in the R-package JADE and analysed, for example, in \citet{MiettinenNordhausenTaskinen2017}. To the three signal components we added 17 white noise components which all had $t_5$-distributions to study whether the methods also work in case of non-Gaussian white noise. After standardizing the 20 components to have unit variances, we used a random square matrix where each element came from the uniform distribution on $[0,1]$. The original signals had a length of 50000 and for convenience we selected the first 10000 instances. The 20 mixed components are visualized in Figure~\ref{fig:DataSoundExample} and reveal no clear structure.

\begin{figure}[H]
	\centering
	\includegraphics[width=0.99\textwidth]{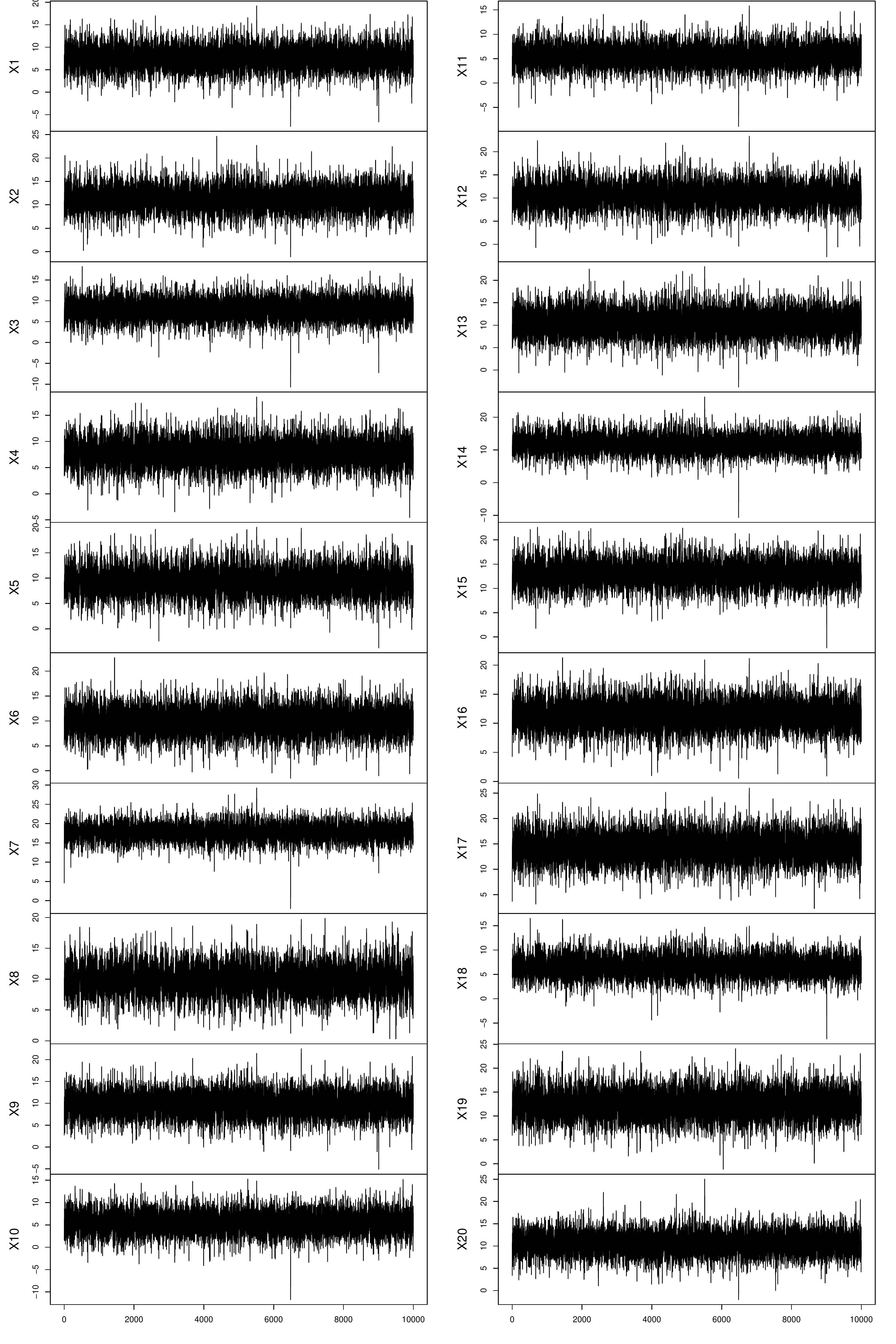}
	\caption{The 20-variate sound data time series.}
	\label{fig:DataSoundExample}
\end{figure}

We used the divide-and-conquer approach to estimate the signal dimension with our asymptotic test and the bootstrapping strategy of \citet{MatilainenNordhausenVirta2017} used in Section~\ref{sec:simulations_1}. Additionally, we considered also the ladle estimator using stationary bootstrapping with the expected block length 40. Of each estimator, three versions, AMUSE, SOBI6 and SOBI12, were computed. All nine estimators estimated correctly 3 as the signal dimension and the estimated signals based on SOBI6 are shown in Figure~\ref{fig:EstimatedSignalsSoundExample}.

\begin{figure}[t!]
	\centering
	\includegraphics[width=0.8\textwidth]{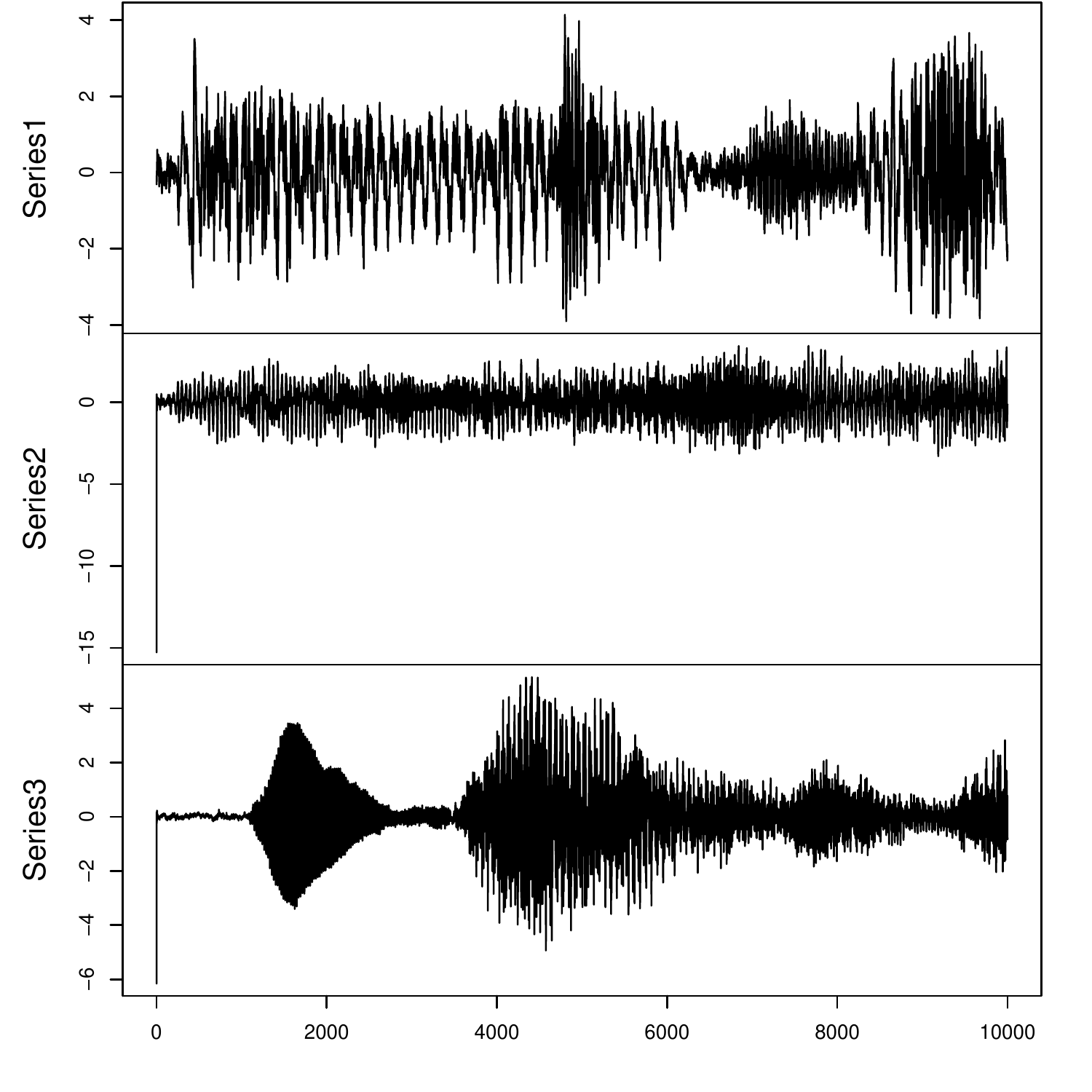}
	\caption{The three estimated sound signals based on SOBI6.}
	\label{fig:EstimatedSignalsSoundExample}
\end{figure}

The computation times of the nine methods were, however, quite different and are given in Table~\ref{tab:SoundExTimings}. 

\begin{table}[ht]
	\centering
	\footnotesize
	\caption{Computation times (in seconds) of the nine estimators for the sound example data.} \label{tab:SoundExTimings}
	\begin{tabular}{ccccccccc}
	\toprule
	  \multicolumn{3}{c}{Asymptotic tests}  & \multicolumn{3}{c}{Bootstrap tests} & \multicolumn{3}{c}{Ladle estimator}\tabularnewline \cmidrule(l){1-3}  \cmidrule(l){4-6} \cmidrule(l){7-9}
		 AMUSE &SOBI6 & SOBI12 & AMUSE &SOBI6 & SOBI12 & AMUSE &SOBI6 & SOBI12 \\ 
		\midrule
		 0.07 & 0.19 & 0.49 & 15.08 & 47.24 & 88.08 & 2.75 & 9.85 & 18.17 \\ 
		\bottomrule
	\end{tabular}
\end{table}

As all the approaches estimated the dimension correctly, the ones based on the asymptotic test are clearly favourable due to their computational speed. Although, we note that the ladle and the bootstrap tests can also be run parallelized while in the current comparison we used only a single core.

\section{Discussion}\label{sec:discussion}

We proposed an asymptotic test for estimating the signal dimension in an SOS-model where the sources include both signal series and white noise. The test does not require the estimation of any parameters and makes quite weak assumptions. This combined with the fact that both of its existing competitors are based on the use of computer-intensive resampling techniques makes the asymptotic test a very attractive option in practice. This conclusion was supported by our simulations studies and real data example exploring dimension estimation for sound recording data.

A central drawback of the proposed method is its inability to recognize non-autocorrelated signals, such as those coming from stochastic volatility models, from white noise. One way to get around this limitation is to replace $ \textbf{z}_{2t} $ in \eqref{eq:model} by a vector of stochastic volatility series and to revert the roles of signal and noise. That is, we use hypothesis testing to estimate the dimension of the ``noise'' subspace (containing the stochastic volatility components) and separate them from the uninteresting ``signal'' series exhibiting autocorrelation. For this to work, a limiting result equivalent to Lemma \ref{lem:3} is needed for the above combination model. Similar idea was suggested in the context of the ladle estimator already in \cite{NordhausenVirta2018}.

%
\vskip 14pt
\noindent {\large\bf Acknowledgements}

Klaus Nordhausen acknowledges support from the CRoNoS COST Action IC1408. 
\par


\appendix

\section{Proofs}\label{sec:appendix}


\begin{proof}[Proof of Lemma \ref{lem:0}]
	By Assumption \ref{assu:consistent} we have,
	\[ 
	\sqrt{T} ( \hat{\textbf{S}}_\tau - \textbf{D}_\tau) = \mathcal{O}_p(1), \quad \mbox{for all } \tau \in \mathcal{T} \cup \{ 0 \},
	\]
	where $ \textbf{D}_0 = \textbf{I}_p $. This instantly implies the equivalent result for the symmetrized autocovariance matrices $ \hat{\textbf{R}}_\tau $,
	\[ 
	\sqrt{T} ( \hat{\textbf{R}}_\tau - \textbf{D}_\tau) = \mathcal{O}_p(1), \quad \mbox{for all } \tau \in \mathcal{T}.
	\]
	Let $ \mathrm{vect} $ be the row-vectorization operator that takes the row vectors of a matrix and stacks them into a long column. That is, $ \mathrm{vect}(\textbf{X}) \in \mathbb{R}^{mn}$ for any $ \textbf{A} \in \mathbb{R}^{m \times n}$ and $\mathrm{vect}(\textbf{A} \textbf{X} \textbf{B}) = (\textbf{A} \otimes \textbf{B}) \mathrm{vect}(\textbf{X})$ for any $ \textbf{A} \in \mathbb{R}^{s \times m}, \textbf{A} \in \mathbb{R}^{m \times n}, \textbf{B} \in \mathbb{R}^{n \times t}$. By linearizing and row-vectorizing the definition $ \textbf{0} = \sqrt{T} (\hat{\textbf{S}}{}_0^{-1/2} \hat{\textbf{S}}{}_0 \hat{\textbf{S}}{}_0^{-1/2} - \textbf{I}_p) $ and using Slutsky's theorem, we obtain,
	\[ 
	\hat{\textbf{B}} \sqrt{T} \mathrm{vect} ( \hat{\textbf{S}}{}_0^{-1/2} - \textbf{I}_p ) = - \sqrt{T} \mathrm{vect} ( \hat{\textbf{S}}{}_0 - \textbf{I}_p ) + o_p(1),
	\]
	where $ \hat{\textbf{B}} =  \textbf{I}_p  \otimes \hat{\textbf{S}}{}_0 \hat{\textbf{S}}{}_0^{-1/2} + \textbf{I}_p \otimes \textbf{I}_p $. As $ \hat{\textbf{B}} \rightarrow_p \textbf{I}_{p^2} $, its inverse is asymptotically well-defined, allowing us to multiply the relation from the left by $ \hat{\textbf{B}}{}^{-1} $, after which Slutsky's theorem and Assumption \ref{assu:consistent} yield that $ \sqrt{T} ( \hat{\textbf{S}}{}_0^{-1/2} - \textbf{I}_p ) = \mathcal{O}_p(1) $.
	
	Linearize next as,
	\begin{align}\label{eq:H_linearization}
	\begin{split}
	\sqrt{T} ( \hat{\textbf{H}}_\tau - \textbf{D}_\tau) &= \sqrt{T} ( \hat{\textbf{S}}{}_0^{-1/2} - \textbf{I}_p ) \hat{\textbf{R}}_\tau \hat{\textbf{S}}{}_0^{-1/2} + \sqrt{T} ( \hat{\textbf{R}}_\tau - \textbf{D}_\tau ) \hat{\textbf{S}}{}_0^{-1/2}\\
	&+ \textbf{D}_\tau \sqrt{T} ( \hat{\textbf{S}}{}_0^{-1/2} - \textbf{I}_p ),
	\end{split}
	\end{align}
	where the right-hand side is by the previous convergence results expressible as $ \sqrt{T} ( \hat{\textbf{R}}_\tau - \textbf{D}_\tau ) + \mathcal{O}_p(1) $. The first claim now follows after the division by $ \sqrt{T} $ and the addition of $ \textbf{D}_\tau $ on both sides.
	
	For the second claim we observe only the lower right $ p_0 \times p_0 $ corner block $ \hat{\textbf{H}}_{\tau00} $ and write,
	\begin{align}\label{eq:H00_linearization} 
	\hat{\textbf{H}}_{\tau00} = \hat{\textbf{T}}_1^\top \hat{\textbf{R}}_{\tau,-0} \hat{\textbf{T}}_1 + \hat{\textbf{T}}_2^\top \hat{\textbf{R}}_{\tau00} \hat{\textbf{T}}_2,  
	\end{align}
	where $ (\hat{\textbf{T}}{}_1^\top, \hat{\textbf{T}}{}_2^\top)^\top $, $ \hat{\textbf{T}}{}_1 \in \mathbb{R}^{(p - p_0) \times p_0} $, $ \hat{\textbf{T}}{}_2 \in \mathbb{R}^{p_0 \times p_0} $ denotes the final $ p \times p_0 $ column block of $  \hat{\textbf{S}}{}_0^{-1/2} $ and $ \hat{\textbf{R}}_{\tau,-0} \in \mathbb{R}^{(p - p_0) \times (p - p_0)} $ denotes the result of removing the final $ p_0 $ rows and columns from $ \hat{\textbf{R}}_\tau $. These matrices satisfy $ \hat{\textbf{T}}_1 = \mathcal{O}_p(1/\sqrt{T}) $, $ \hat{\textbf{T}}_2 - \textbf{I}_{p_0} =  \mathcal{O}_p(1/\sqrt{T}) $ and  $ \hat{\textbf{R}}_{\tau00} = \mathcal{O}_p(1/\sqrt{T}) $ and we can write \eqref{eq:H00_linearization} as
	\begin{align*} 
	\hat{\textbf{H}}_{\tau00} &= \hat{\textbf{T}}_1^\top \hat{\textbf{R}}_{\tau,-0} \hat{\textbf{T}}_1 + ( \hat{\textbf{T}}_2 - \textbf{I}_{p_0} )^\top \hat{\textbf{R}}_{\tau00} ( \hat{\textbf{T}}_2 - \textbf{I}_{p_0} ) + \hat{\textbf{R}}_{\tau00} ( \hat{\textbf{T}}_2 - \textbf{I}_{p_0} ) \\
	&+ ( \hat{\textbf{T}}_2 - \textbf{I}_{p_0} )^\top \hat{\textbf{R}}_{\tau00} + \hat{\textbf{R}}_{\tau00} \\
	&= \hat{\textbf{R}}_{\tau00} + \mathcal{O}_p(1/T),
	\end{align*}
	concluding the proof.
\end{proof}

\begin{proof}[Proof of Lemma \ref{lem:1}]
	The SOBI-solution is found as $ \hat{\textbf{U}}{}^\top \hat{\textbf{S}}{}_{0}^{-1/2}$ where the orthogonal matrix $ \hat{\textbf{U}} $ is the maximizer of
	\begin{align}\label{eq:sobi_problem} 
	\hat{g}(\textbf{U}) = \sum_{\tau \in \mathcal{T}} \left\| \mathrm{diag} \left( \textbf{U}^\top \hat{\textbf{H}}_\tau \textbf{U}  \right) \right\|^2.
	\end{align}
	Let $ \hat{\textbf{U}} $ be a sequence of maximizers of \eqref{eq:sobi_problem} and partition $ \hat{\textbf{U}} $ in the blocks $ \hat{\textbf{U}}_{ij} $ in a similar way as in the problem statement (ignoring the sequence of permutations $ \hat{\textbf{P}} $ for now). The proof of the lemma is divided into two parts. First, we establish the consistency of the off-diagonal blocks, $ \hat{\textbf{U}}_{ij} \rightarrow_p \textbf{0} $, and, second, we show the rate of convergence, $\sqrt{T}\hat{\textbf{U}}_{ij} = \mathcal{O}_p( 1 )$. That the diagonal blocks $ \hat{\textbf{U}}_{ii} $ are stochastically bounded follows simply from the compactness of the space of orthogonal matrices.
	
	\medskip
	
	\noindent{\small\textbf{1. Consistency}}
	
	\medskip
	
	Our aim to is to use a technique similar to the $ M $-estimator consistency argument \citep[Theorem 5.7]{van1998asymptotic}, for which we need the Fisher consistency of the off-diagonal blocks, along with the uniform convergence of the sample objective function to the population objective function with respect to $ \textbf{U} $. By Fisher consistency we mean that all maximizers $ \textbf{U} $ of the population objective function, 
	\[ 
	g(\textbf{U}) = \sum_{\tau \in \mathcal{T}} \left\| \mathrm{diag} \left( \textbf{U}^\top  \textbf{H}_{\tau}  \textbf{U}  \right) \right\|^2,
	\]
	where $ \textbf{H}_\tau = \textbf{S}{}_{0}^{-1/2} \textbf{R}_{\tau} \textbf{S}{}_{0}^{-1/2} $, can have their columns ordered to satisfy $ \textbf{U}_{ij} = \textbf{0} $ for all $ i \neq j $.
	
	The population autocovariance matrices satisfy,
	\[ 
	\textbf{S}_{0} = \textbf{I}_p \quad \mbox{and} \quad \textbf{H}_{\tau} = \textbf{R}_{\tau} = \textbf{S}_{\tau} = \textbf{D}_\tau,
	\]
	where $ \textbf{D}_\tau = \mathrm{diag}(\lambda_{\tau 1}\textbf{I}_{p_1} , \ldots , \lambda_{\tau v} \textbf{I}_{p_v}, \textbf{0}) \in \mathbb{R}^{p \times p}$ are diagonal matrices, $ \tau \in \mathcal{T} $.
	The population objective function has the upper bound,
	\begin{align}\label{eq:population_objective}
	g(\textbf{U}) = \sum_{\tau \in \mathcal{T}} \left\| \mathrm{diag} \left( \textbf{U}^\top  \textbf{D}_{\tau}  \textbf{U}  \right) \right\|^2 \leq \sum_{\tau \in \mathcal{T}} \left\|  \textbf{U}^\top  \textbf{D}_{\tau}  \textbf{U} \right\|^2 = \sum_{\tau \in \mathcal{T}} \sum_{j = 1}^v \lambda_{\tau j}^2 p_j ,
	\end{align}
	with equality if and only if $  \textbf{U}^\top  \textbf{D}_{\tau}  \textbf{U} $, $ \tau \in \mathcal{T} $ are diagonal matrices, i.e. $ \textbf{U} $ is an eigenvector matrix of all $ \textbf{D}_{\tau} $, $ \tau \in \mathcal{T} $. One such matrix is  $ \textbf{U} = \textbf{I}_p $ and the maximal value of $ g(\textbf{U}) $ is thus indeed $ \sum_{\tau \in \mathcal{T}} \sum_{j = 1}^v \lambda_{\tau j}^2 p_j $.
	
	We next show that a both sufficient and necessary condition for $ \textbf{W} $ to be a maximizer of $ g $ is that $ \textbf{W} $ has, up to the ordering of its columns, the form 
	\begin{align}\label{eq:U_form} 
	\textbf{W} =
	\begin{pmatrix}
	\textbf{W}_{11} & \textbf{0} & \cdots & \textbf{0} \\
	\textbf{0} & \ddots & \ddots & \vdots \\
	\vdots & \ddots & \textbf{W}_{vv} & \textbf{0} \\
	\textbf{0} & \cdots & \textbf{0} & \textbf{W}_{00}
	\end{pmatrix} ,
	\end{align}
	where the partition into blocks is as in the statement of the lemma and the diagonal blocks $ \textbf{W}_{00}, \textbf{W}_{11}, \ldots , \textbf{W}_{vv}$ are orthogonal.
	
	We start with a the ``necessary''-part. Let $ \textbf{W} $ be an arbitrary maximizer of $ g $ and take its first column $ \textbf{w} = ( \textbf{w}_1^\top, \ldots , \textbf{w}_v^\top, \textbf{w}_0^\top)^\top $, partitioned in subvectors of lengths $ p_1, \ldots , p_v, p_0 $. As equality is reached in the inequality \eqref{eq:population_objective} only when $ \textbf{U} $ is an eigenvector of all $ \textbf{D}_\tau $, we have that $ \textbf{D}_\tau \textbf{w} = \pi_\tau \textbf{w}$ for some $ \pi_\tau \in \mathbb{R} $ for all $ \tau \in \mathcal{T} $. It then holds for all $ \tau $ that,
	\[
	\textbf{0} = (\textbf{D}_\tau - \pi_\tau \textbf{I}_p)\textbf{w} = 
	\begin{pmatrix}
	(\lambda_{\tau 1} - \pi_\tau) \textbf{I}_{p_1} & \textbf{0} & \cdots & \textbf{0}\\
	\textbf{0} & \ddots & \ddots & \vdots \\
	\vdots & \ddots & (\lambda_{\tau v} - \pi_\tau) \textbf{I}_{p_v} & \textbf{0} \\
	\textbf{0} & \cdots & \textbf{0} &  - \pi_\tau \textbf{I}_{p_0}
	\end{pmatrix}
	\begin{pmatrix}
	\textbf{w}_1 \\
	\vdots \\
	\textbf{w}_v \\
	\textbf{w}_0
	\end{pmatrix},
	\] 
	which yields the equation group,
	\[ 
	\textbf{0} =
	\begin{pmatrix}
	(\lambda_{\tau 1} - \pi_\tau) \textbf{w}_1 \\
	\vdots \\
	(\lambda_{\tau v} - \pi_\tau) \textbf{w}_v \\
	- \pi_\tau \textbf{w}_0
	\end{pmatrix}.
	\]
	We next proceed by proof through contradiction. Assume that two distinct subvectors of $ \textbf{w} $, say $ \textbf{w}_k$ and $ \textbf{w}_\ell $, both contain a non-zero element. Then
	\begin{align}\label{eq:eigen_consequence}
	\lambda_{\tau k} = \pi_\tau \quad \mbox{and} \quad \lambda_{\tau \ell} = \pi_\tau, \quad \forall \tau \in \mathcal{T},
	\end{align} 
	and we recall that $ \lambda_{\tau0} = 0$ for all $ \tau \in \mathcal{T} $. The identities \eqref{eq:eigen_consequence} imply that $ \lambda_{\tau k} = \lambda_{\tau \ell} $ for all $ \tau \in \mathcal{T} $, i.e., that the $ k $th and $ \ell $th blocks have perfectly matching autocovariance structures. If $ k \neq 0 $ and $ \ell \neq 0 $, this is a contradiction as the blocks were defined such that two distinct blocks always correspond to differing autocovariance structures. Moreover, if either $ k = 0 $ or $ \ell = 0 $, then $ \lambda_{\tau k} = \lambda_{\tau \ell} = 0 $ and we have found a signal (block) that has all autocovariances zero, contradicting Assumption \ref{assu:signal_from_noise}. Consequently, exactly one subvector of $ \textbf{w} $ is non-zero. As the choice of $ \textbf{w} $ within $ \textbf{W} $ was arbitrary, the result holds for all columns of $ \textbf{W} $. 
	
	We next show that exactly $ p_j $ columns of $ \textbf{W} $ have non-zero $ j $th subvector, $ j = 0, 1, \ldots , v $. Again the proof is by contradiction. Pick an arbitrary $ j = 0, 1, \ldots , v $ and assume that more than $ p_j $ columns of $ \textbf{W} $ are such that their non-zero part lies in the $ j $th subvector. Then these subvectors form a collection of more than $ p_j $ linearly independent vectors of length $ p_j $ (the linear independence follows as $ \textbf{W} $ is invertible and as each of its columns has non-zero elements in exactly one of the subvectors). This is a contradiction as no sets of linearly independent vectors with cardinality greater than $ n $ exist in $ \mathbb{R}^n $. Thus at most $ p_j $ columns of $ \textbf{W} $ have non-zero $ j $th subvector. Since the choice of $ j $ was arbitrary, the conclusion holds for all $ j = 0, 1, \ldots, v$ and we conclude that the size of the $ j $th block must be exactly $ p_j $. Ordering the columns now suitably shows that $ \textbf{W} $ must have the form \eqref{eq:U_form}, proving the first part of the argument.
	
	To see the sufficiency of the block diagonal form \eqref{eq:U_form} we first notice that any matrix $ \textbf{W} $ that can be column-permuted so that $ \textbf{W} \textbf{P} $ is of the form \eqref{eq:U_form} satisfies $ \textbf{D}_\tau \textbf{W} \textbf{P} = \textbf{W} \textbf{P} \textbf{D}_\tau $, $ \tau \in \mathcal{T} $. Thus,
	\begin{align*} 
	g(\textbf{W}) &= \sum_{\tau \in \mathcal{T}} \left\| \mathrm{diag} \left( \textbf{W}^\top  \textbf{D}_{\tau}  \textbf{W}  \right) \right\|^2\\
	&= \sum_{\tau \in \mathcal{T}} \left\| \mathrm{diag} \left( \textbf{P} \textbf{P}^\top \textbf{W}^\top  \textbf{D}_{\tau}  \textbf{W} \textbf{P} \textbf{P}^\top \right) \right\|^2\\
	&= \sum_{\tau \in \mathcal{T}} \left\|   \mathrm{diag} \left( \textbf{P} \textbf{D}_{\tau} \textbf{P}^\top \right) \right\|^2\\
	&= \sum_{\tau \in \mathcal{T}} \sum_{j = 1}^v \lambda_{\tau j}^2 p_j,
	\end{align*}   
	and we see that any $ \textbf{W} $ that is column-permutable to the form \eqref{eq:U_form} achieves the maximum. The sufficiency in conjunction with the necessity now equals the Fisher consistency of the population level problem.

	We next move to the sample properties of the sequence of SOBI-solutions $ \hat{\textbf{U}} $ and show the consistency of its ``off-diagonal blocks''. That is, we prove that any sequence of maximizers $ \hat{\textbf{U}} $ of $ \hat{g} $ can be permuted such that the off-diagonal blocks satisfy $ \hat{\textbf{U}}_{ij} \rightarrow_p \textbf{0} $. 
	
	Let the set of all $ p \times p $ orthogonal matrices be denoted by $ \mathcal{U}^{p} $. To temporarily get rid of the unidentifiability of the ordering of the columns, we work in a specific subset of $ \mathcal{U}^p $.
	\[
	\mathcal{U}_0 = \{ \textbf{U} = (\textbf{u}_1, \ldots, \textbf{u}_p) \in \mathcal{U}^p \mid \textbf{n}^\top \textbf{u}^2_1 \geq \cdots \geq  \textbf{n}^\top \textbf{u}_p^2 \},
	\]
	where $ \textbf{u}^2 \in \mathbb{R}^p$ is the vector of element-wise squares of $ \textbf{u} \in \mathbb{R}^p $ and $ \textbf{n} = (p, p-1, \ldots, 1)^\top $. All orthogonal matrices $\textbf{U} \in \mathcal{U}^p $ may have their columns permuted such that the permuted matrix belongs to $ \mathcal{U}_0 $. In case of ties in the condition defining $ \mathcal{U}_0 $, we arbitrarily choose one of the permutations. Let then $ \hat{\textbf{U}} $ be an arbitrary sequence of maximizers of $ \hat{g} $, every term of which we assume, without loss of generality, to be a member of $ \mathcal{U}_0 $.
	
	We first note that the uniform convergence of the sample objective function to the population objective function,
	\begin{align}\label{eq:uniform_convergence}
	\sup_{\textbf{U} \in \mathcal{U}_0} \left| \hat{g}(\textbf{U}) - g(\textbf{U}) \right| \rightarrow_p 0, 
	\end{align}
	can be seen to hold as in the proof of \cite[Theorem 1]{MiettinenIllnerNordhausenOjaTaskinenTheis2016}.
	
	Let the set of all $ \textbf{U} \in \mathcal{U}^p $ of the form \eqref{eq:U_form} be denoted by $ \mathcal{U}_P $ and define the set of all population level SOBI-solutions in $ \mathcal{U}_0 $ as
	\[ 
	\mathcal{U}_S = \{ \textbf{U} \in \mathcal{U}_0 \mid g(\textbf{U}) \geq g(\textbf{V}), \mbox{ for all } \textbf{V} \in \mathcal{U}_0  \}.
	\]
	We now claim that the set $ \mathcal{U}_0 $ is constructed such that we have $ \mathcal{U}_S \subset \mathcal{U}_P $. To see this, we prove the contrapositive claim that $ \mathcal{U} \setminus \mathcal{U}_{P} \subset \mathcal{U} \setminus \mathcal{U}_{S} $. Take an arbitrary $ \textbf{U} \in \mathcal{U} \setminus \mathcal{U}_{P} $. If $ \textbf{U} $ is not a maximizer of $ g $, then clearly $ \textbf{U} \in \mathcal{U} \setminus \mathcal{U}_{S} $ and we are done. If instead $ \textbf{U} $ is a maximizer of $ g $, then it must have two columns $ \textbf{u}_k, \textbf{u}_\ell $ such that $ k < \ell $ and $ \textbf{u}_k $ belongs to the $ i $th column block and $ \textbf{u}_\ell $ belongs to the $ j $th column block with $ i > j $ (the two columns are in wrong order with respect to $ \mathcal{U}_P $). However, then $ \textbf{n}^\top \textbf{u}^2_k \leq p - \sum_{k = 1}^{i-1} p_k < p - \sum_{k = 1}^j p_k + 1 \leq \textbf{n}^\top \textbf{u}^2_\ell $ and $ \textbf{U} \notin \mathcal{U}_0 $, implying that again $ \textbf{U} \in \mathcal{U} \setminus \mathcal{U}_{S} $. Us having exhausted all cases, any $ \textbf{U} \in \mathcal{U}_S $ is thus also a member of $ \mathcal{U}_P $ and has $ \textbf{U}_{ij} = 0 $ for all $ i \neq j $ where the partitioning is as in the statement of the lemma.
	
	We prove the consistency via showing that the sequence of solutions $ \hat{\textbf{U}} $ gets arbitrarily close to the solution set $ \mathcal{U}_S $ in the sense that,
	\[ 
	\mathbb{P}(\inf_{\textbf{V} \in \mathcal{U}_S} \| \hat{\textbf{U}} - \textbf{V} \|^2 > \varepsilon) \rightarrow 0, \quad \forall \varepsilon > 0.
	\]
	To see this, fix $ \varepsilon > 0 $ and define the $ \varepsilon $-neighbourhood of $ \mathcal{U}_S $ in $ \mathcal{U}_0 $ as
	\[ 
	\mathcal{U}_{S\varepsilon} = \{ \textbf{U} \in \mathcal{U}_0 \mid \inf_{\textbf{V} \in \mathcal{U}_S} \| \textbf{U} - \textbf{V} \|^2 \leq \varepsilon \}.
	\]
	Then
	\[ 
	\mathbb{P}(\inf_{\textbf{V} \in \mathcal{U}_S} \| \hat{\textbf{U}} - \textbf{V} \|^2 > \varepsilon) = \mathbb{P}(\hat{\textbf{U}} \in \mathcal{U}_0 \setminus \mathcal{U}_{S\varepsilon}). 
	\]
	As all maximizers of $ g $ in $ \mathcal{U}_0 $ lie in $ \mathcal{U}_{S} $, there exists $ \delta = \delta(\varepsilon) > 0 $ strictly positive such that for all $ \textbf{V} \in \mathcal{U}_0 \setminus \mathcal{U}_{S\varepsilon} $ we have $ g(\textbf{V}) < g(\textbf{U}_S) - \delta $ where $ \textbf{U}_S $ is an arbitrary element of $ \mathcal{U}_S $. This gives us,
	\[ 
	\mathbb{P}(\inf_{\textbf{V} \in \mathcal{U}_S} \| \hat{\textbf{U}} - \textbf{V} \|^2 > \varepsilon) \leq \mathbb{P}(  g(\textbf{U}_S) - g(\hat{\textbf{U}}) > \delta).
	\]
	By the definition of $ \hat{\textbf{U}} $ as a maximizer of $ \hat{g} $, we have $ \hat{g}(\hat{\textbf{U}}) \geq \hat{g}(\textbf{U}_S) $ and can construct the sequence of inequalities,
	\begin{align*}
	0 &\leq g(\textbf{U}_S) - g(\hat{\textbf{U}}) \\
	&\leq \hat{g}(\hat{\textbf{U}}) - g(\hat{\textbf{U}}) + g(\textbf{U}_S) - \hat{g}(\textbf{U}_S) \\
	&\leq 2 \sup_{\textbf{U} \in \mathcal{U}_0} \left| \hat{g}(\textbf{U}) - g(\textbf{U}) \right|,
	\end{align*}
	where invoking \eqref{eq:uniform_convergence} shows that $ g(\textbf{U}_S) - g(\hat{\textbf{U}}) \rightarrow_p 0 $. Consequently,
	\[ 
	\mathbb{P}(\inf_{\textbf{V} \in \mathcal{U}_S} \| \hat{\textbf{U}} - \textbf{V} \|^2 > \varepsilon) \leq \mathbb{P}(  g(\textbf{U}_S) - g(\hat{\textbf{U}}) > \delta) \rightarrow 0,
	\]
	and we have that $ \inf_{\textbf{V} \in \mathcal{U}_S} \| \hat{\textbf{U}} - \textbf{V} \|^2 = o_p(1) $. Writing this block-wise and remembering that all elements of $ \mathcal{U}_S \subset \mathcal{U}_P$ have off-diagonal blocks equal to zero, we get,
	\[ 
	\inf_{\textbf{V} \in \mathcal{U}_S} \| \hat{\textbf{U}} - \textbf{V} \|^2 = \inf_{\textbf{V} \in \mathcal{U}_S} \left( \sum_{i = 0}^v \| \hat{\textbf{U}}_{ii} - \textbf{V}_{ii} \|^2 + \sum_{i \neq j} \| \hat{\textbf{U}}_{ij} \|^2 \right) \geq \sum_{i \neq j} \| \hat{\textbf{U}}_{ij} \|^2,
	\]
	implying that all off-diagonal blocks of $ \hat{\textbf{U}} $ satisfy $\| \hat{\textbf{U}}_{ij} \| = o_p(1)$. Consequently, for every arbitrary sequence of solutions $ \hat{\textbf{U}} $, there exists a sequence of permutation matrices $ \hat{\textbf{P}} $ (chosen so that $  \hat{\textbf{U}} \hat{\textbf{P}} \in \mathcal{U}_0 $) such that the off-diagonal blocks of $ \hat{\textbf{U}} \hat{\textbf{P}} $ converge in probability to zero.

	\medskip
	
	\noindent{\small\textbf{2. Convergence rate}}
	
	\medskip
	
	We next establish that the off-diagonal blocks of any sequence of solutions $ \hat{\textbf{U}} \in \mathcal{U}_0 $ converge at the rate of root-$ T $. The claimed result then follows for an arbitrary sequence of solutions $ \hat{\textbf{U}} $ by choosing the sequence of permutations $ \hat{\textbf{P}} $ such that $  \hat{\textbf{U}} \hat{\textbf{P}} \in \mathcal{U}_0 $.
	
	By \cite[Definition 2]{MiettinenIllnerNordhausenOjaTaskinenTheis2016}, the estimating equations of the SOBI-solution $ \hat{\textbf{U}} = (\hat{\textbf{u}}_1, \ldots , \hat{\textbf{u}}_p)$ are,
	\begin{align}\label{eq:estimating_equations} 
	\textbf{u}_k^\top \sum_{\tau \in \mathcal{T}} \hat{\textbf{H}}_\tau \hat{\textbf{u}}_\ell  \textbf{u}_\ell^\top \hat{\textbf{H}}_\tau \hat{\textbf{u}}_\ell = \textbf{u}_\ell^\top \sum_{\tau \in \mathcal{T}} \hat{\textbf{H}}_\tau \hat{\textbf{u}}_k  \textbf{u}_k^\top \hat{\textbf{H}}_\tau \hat{\textbf{u}}_k, \quad \forall k, \ell = 1, \ldots , p,
	\end{align}
	along with the orthogonality constraint $ \textbf{U}^\top \textbf{U} = \textbf{I}_p $. The set of estimating equations \eqref{eq:estimating_equations} can be written in matrix form as,
	\[
	\sum_{\tau \in \mathcal{T}}  \hat{\textbf{U}}{}^\top \hat{\textbf{H}}_\tau \hat{\textbf{U}} \mathrm{diag}(\hat{\textbf{U}}{}^\top \hat{\textbf{H}}_\tau \hat{\textbf{U}}) =  \sum_{\tau \in \mathcal{T}} \mathrm{diag}(\hat{\textbf{U}}{}^\top \hat{\textbf{H}}_\tau \hat{\textbf{U}}) \hat{\textbf{U}}{}^\top \hat{\textbf{H}}_\tau \hat{\textbf{U}},
	\]
	which is equivalent to claiming that the matrix $ \hat{\textbf{Y}} = \sum_{\tau \in \mathcal{T}}  \hat{\textbf{U}}{}^\top \hat{\textbf{H}}_\tau \hat{\textbf{U}} \mathrm{diag}(\hat{\textbf{U}}{}^\top \hat{\textbf{H}}_\tau \hat{\textbf{U}})  $ is symmetric, $ \hat{\textbf{Y}} = \hat{\textbf{Y}}{}^\top $.
	
	We next take $ \hat{\textbf{Y}} $, multiply it by $ \sqrt{T} $ and expand as $\hat{\textbf{H}}_\tau = \hat{\textbf{H}}_\tau - \textbf{D}_\tau + \textbf{D}_\tau $ to obtain,
	\begin{align}\label{eq:y_expansion_H}
	\begin{split}
	\sqrt{T} \hat{\textbf{Y}} =& \sum_{\tau \in \mathcal{T}}  \hat{\textbf{U}}{}^\top \sqrt{T} (\hat{\textbf{H}}_\tau - \textbf{D}_\tau) \hat{\textbf{U}} \mathrm{diag}(\hat{\textbf{U}}{}^\top \hat{\textbf{H}}_\tau \hat{\textbf{U}}) \\
	+& \sum_{\tau \in \mathcal{T}}  \hat{\textbf{U}}{}^\top \textbf{D}_\tau \hat{\textbf{U}} \mathrm{diag}(\hat{\textbf{U}}{}^\top \sqrt{T} (\hat{\textbf{H}}_\tau - \textbf{D}_\tau) \hat{\textbf{U}}) \\
	+& \sqrt{T} \sum_{\tau \in \mathcal{T}}  \hat{\textbf{U}}{}^\top \textbf{D}_\tau \hat{\textbf{U}} \mathrm{diag}(\hat{\textbf{U}}{}^\top \textbf{D}_\tau \hat{\textbf{U}}).
	\end{split}
	\end{align}
	As $ \hat{\textbf{U}} = \mathcal{O}_p(1) $ by its orthogonality and $ \sqrt{T} (\hat{\textbf{H}}_\tau - \textbf{D}_\tau) = \mathcal{O}_p(1)$ by Lemma~\ref{lem:0}, the first two terms on the right-hand side of \eqref{eq:y_expansion_H} are bounded in probability and we may lump them under a single $ \mathcal{O}_p(1) $-symbol,
	\begin{align}\label{eq:y_form}
	\sqrt{T} \hat{\textbf{Y}} = \sqrt{T} \sum_{\tau \in \mathcal{T}}  \hat{\textbf{U}}{}^\top \textbf{D}_\tau \hat{\textbf{U}} \mathrm{diag}(\hat{\textbf{U}}{}^\top \textbf{D}_\tau \hat{\textbf{U}}) + \mathcal{O}_p(1).
	\end{align}
	Inspect next the term $ \hat{\textbf{D}}_\tau = \mathrm{diag} ( \hat{\textbf{U}}{}^\top \textbf{D}_\tau \hat{\textbf{U}} ) $. Performing the matrix multiplication block-wise we get as the $ (i, j) $th block of $\hat{\textbf{U}}{}^\top \textbf{D}_\tau \hat{\textbf{U}}$,
	\[ 
	( \hat{\textbf{U}}{}^\top \textbf{D}_\tau \hat{\textbf{U}} )_{ij} = \sum_{k = 0}^v \lambda_{\tau k} \hat{\textbf{U}}_{ki}^\top \hat{\textbf{U}}_{kj}.
	\]
	As $ \hat{\textbf{U}}{}_{ij}^\top \rightarrow_p \textbf{0}$ and  $ \hat{\textbf{U}}{}_{ii}^\top \hat{\textbf{U}}_{ii} \rightarrow_p \textbf{I}_{p_i}$ (the latter follows from the orthogonality of $ \hat{\textbf{U}} $ and the consistency of its off-diagonal blocks), we have,
	\[ 
	( \hat{\textbf{U}}{}^\top \textbf{D}_\tau \hat{\textbf{U}} )_{ij} = \delta_{ij} \lambda_{\tau i} \textbf{I}_{p_i} + o_p(1),
	\]
	where $ \delta_{\cdot \cdot} $ is the Kronecker delta. Consequently,
	\[ 
	\hat{\textbf{D}}_\tau = \mathrm{diag}( \hat{\textbf{U}}{}^\top \textbf{D}_\tau \hat{\textbf{U}}) = \textbf{D}_\tau + o_p(1).
	\]
	
	Denote by $ \hat{\textbf{U}}_{i, -j} \in \mathbb{R}^{(p - p_j) \times p_i} $ the $ i $th column block of $ \hat{\textbf{U}} $ with the $ j $th block removed, by $ \textbf{D}_{\tau, -j} \in \mathbb{R}^{(p - p_j) \times (p - p_j)} $ the result of removing the $ j $th column and row blocks of $ \textbf{D}_\tau $ and by $ \hat{\textbf{D}}_{\tau j} \rightarrow_p \lambda_{\tau j} \textbf{I}_{p_j} $ the $ j $th $ p_j \times p_j $ diagonal block of $ \hat{\textbf{D}}_\tau $. Our main claim is equivalent to requiring that,
	\[ 
	\hat{\textbf{U}}_{j, -j} = \mathcal{O}_p \left( \frac{1}{\sqrt{T}} \right), \quad \mbox{for all } j = 0, \ldots , v.
	\]
	To show this, fix next $ j $ and take the $ (i, j) $th block of the matrix $ \sqrt{T} \hat{\textbf{Y}} $ where $ i \neq j $ is arbitrary and separate the $ j $th term in the block-wise matrix multiplication of \eqref{eq:y_form} to obtain,
	\begin{align}\label{eq:separated_y} 
	\sqrt{T} \hat{\textbf{Y}}_{ij} = \sqrt{T} \sum_{\tau \in \mathcal{T}} \hat{\textbf{U}}_{i, -j}^\top \textbf{D}_{\tau, -j} \hat{\textbf{U}}_{j, -j} \hat{\textbf{D}}_{\tau j} + \sqrt{T} \sum_{\tau \in \mathcal{T}} \lambda_{\tau j} \hat{\textbf{U}}_{ji}^\top  \hat{\textbf{U}}_{jj} \hat{\textbf{D}}_{\tau j} + \mathcal{O}_p(1).
	\end{align}
	Opening up the $ (i, j) $th block (still with distinct $ i, j $) of the orthogonality constraint $ \hat{\textbf{U}}{}^\top \hat{\textbf{U}} = \textbf{I}_p$ and again separating the $ j $th term lets us write,
	\[ 
	\hat{\textbf{U}}_{ji}^\top  \hat{\textbf{U}}_{jj} = - \hat{\textbf{U}}_{i, -j}^\top \hat{\textbf{U}}_{j, -j}.
	\]
	Plugging this in to \eqref{eq:separated_y} gives us,
	\begin{align}\label{eq:separated_y_2} 
	\sqrt{T} \hat{\textbf{Y}}_{ij} = \sqrt{T} \sum_{\tau \in \mathcal{T}} \hat{\textbf{U}}_{i, -j}^\top \textbf{D}_{\tau, -j} \hat{\textbf{U}}_{j, -j} \hat{\textbf{D}}_{\tau j} - \sqrt{T} \sum_{\tau \in \mathcal{T}} \lambda_{\tau j} \hat{\textbf{U}}_{i, -j}^\top \hat{\textbf{U}}_{j, -j} \hat{\textbf{D}}_{\tau j} + \mathcal{O}_p(1).
	\end{align}
	Next we invoke the symmetry form, $ \sqrt{T} \hat{\textbf{Y}} = \sqrt{T} \hat{\textbf{Y}}{}^\top $, of the estimating equations \eqref{eq:estimating_equations}. In block form the equations claim that $\sqrt{T} \hat{\textbf{Y}}_{ij} = \sqrt{T} (\hat{\textbf{Y}}{}_{ji})^\top$. Performing now the expansion equivalent to \eqref{eq:separated_y_2} also for $ \sqrt{T} (\hat{\textbf{Y}}{}_{ji})^\top $ (again separating the $ j $th block in the summation) and plugging in the expansions into the symmetry relation, we obtain,
	\begin{align}\label{eq:a_step_1} 
	\begin{split}
	\mathcal{O}_p(1) =& \sqrt{T} \sum_{\tau \in \mathcal{T}} \hat{\textbf{U}}_{i, -j}^\top \textbf{D}_{\tau, -j} \hat{\textbf{U}}_{j, -j} \hat{\textbf{D}}_{\tau j} - \sqrt{T} \sum_{\tau \in \mathcal{T}} \lambda_{\tau j} \hat{\textbf{U}}_{i, -j}^\top \hat{\textbf{U}}_{j, -j} \hat{\textbf{D}}_{\tau j} \\
	-& \sqrt{T} \sum_{\tau \in \mathcal{T}} \hat{\textbf{D}}_{\tau i} \hat{\textbf{U}}_{i, -j}^\top \textbf{D}_{\tau, -j} \hat{\textbf{U}}_{j, -j} + \sqrt{T} \sum_{\tau \in \mathcal{T}} \lambda_{\tau j} \hat{\textbf{D}}_{\tau i} \hat{\textbf{U}}_{i, -j}^\top \hat{\textbf{U}}_{j, -j}.
	\end{split}
	\end{align}
	We then pre-multiply \eqref{eq:a_step_1} by $ \hat{\textbf{U}}_{i, -j} = \mathcal{O}_p(1) $ and sum the result over the index $ i \in \{0, \ldots , v\} \setminus \{ j \} $. Denoting $ \hat{\textbf{A}}_i = \hat{\textbf{U}}_{i, -j} $ this gives us,
	\begin{align}\label{eq:a_step_2} 
	\begin{split}
	\mathcal{O}_p(1) =& \sqrt{T} \sum_{\tau \in \mathcal{T}} \sum_{i \neq j} \hat{\textbf{A}}_i \hat{\textbf{A}}_i^\top \textbf{D}_{\tau, -j} \hat{\textbf{A}}_j \hat{\textbf{D}}_{\tau j} - \sqrt{T} \sum_{\tau \in \mathcal{T}} \sum_{i \neq j} \lambda_{\tau j} \hat{\textbf{A}}_i \hat{\textbf{A}}_i^\top \hat{\textbf{A}}_j \hat{\textbf{D}}_{\tau j} \\
	-& \sqrt{T} \sum_{\tau \in \mathcal{T}} \sum_{i \neq j} \hat{\textbf{A}}_i \hat{\textbf{D}}_{\tau i} \hat{\textbf{A}}_i^\top \textbf{D}_{\tau, -j} \hat{\textbf{A}}_j + \sqrt{T} \sum_{\tau \in \mathcal{T}} \sum_{i \neq j} \lambda_{\tau j} \hat{\textbf{A}}_i \hat{\textbf{D}}_{\tau i} \hat{\textbf{A}}_i^\top \hat{\textbf{A}}_j.
	\end{split}
	\end{align}
	We next row-vectorize \eqref{eq:a_step_2} to obtain us,
	\begin{align}\label{eq:a_step_3} 
	\begin{split}
	\mathcal{O}_p(1) =& \sum_{\tau \in \mathcal{T}} \sum_{i \neq j} \left[ \hat{\textbf{A}}_i \hat{\textbf{A}}_i^\top \textbf{D}_{\tau, -j} \otimes \hat{\textbf{D}}_{\tau j}  - \lambda_{\tau j} \hat{\textbf{A}}_i \hat{\textbf{A}}_i^\top \otimes \hat{\textbf{D}}_{\tau j} \right.\\
	&- \left. \hat{\textbf{A}}_i \hat{\textbf{D}}_{\tau i} \hat{\textbf{A}}_i^\top \textbf{D}_{\tau, -j} \otimes \textbf{I}_{p_j} + \lambda_{\tau j} \hat{\textbf{A}}_i \hat{\textbf{D}}_{\tau i} \hat{\textbf{A}}_i^\top \otimes \textbf{I}_{p_j} \right] \sqrt{T} \mathrm{vect}(\hat{\textbf{A}}_j).
	\end{split}
	\end{align}
	By the consistency of the off-diagonal blocks of $ \hat{\textbf{U}} $, we have $ \hat{\textbf{U}}_{ij} \rightarrow_p \textbf{0}$ for all $ i \neq j $ and $ \hat{\textbf{U}}_{ii} \hat{\textbf{U}}_{ii}^\top \rightarrow_p \textbf{I}_{p_i} $ for all $ i $. Consequently, we have the following convergences in probability, $ \sum_{i \neq j} \hat{\textbf{A}}_i \hat{\textbf{A}}{}_i^\top \rightarrow_p \textbf{I}_{p - p_j} $, $ \sum_{i \neq j} \hat{\textbf{A}}_i \hat{\textbf{D}}_{\tau i} \hat{\textbf{A}}{}_i^\top \rightarrow_p \textbf{D}_{\tau, -j} $ and $ \hat{\textbf{D}}_{\tau j} \rightarrow_p \lambda_{\tau j} \textbf{I}_{p_j} $. Calling next the matrix in the square brackets on the right-hand side of \eqref{eq:a_step_3} by $ \hat{\textbf{C}} \in \mathbb{R}^{(p - p_j)p_j \times (p - p_j)p_j}$, the convergences imply that,
	\begin{align}\label{eq:coefficient_matrix} 
	\hat{\textbf{C}} \rightarrow_p \textbf{C} = \sum_{\tau \in \mathcal{T}} \left[ \lambda_{\tau j} \textbf{D}_{\tau, -j} \otimes \textbf{I}_{p_j} - \lambda_{\tau j}^2 \textbf{I}_{(p - p_j)p_j} - \textbf{D}_{\tau, -j}^2 \otimes \textbf{I}_{p_j} + \lambda_{\tau j} \textbf{D}_{\tau, -j} \otimes \textbf{I}_{p_j} \right].
	\end{align}
	The matrix $ \textbf{C} $ in\eqref{eq:coefficient_matrix} is a diagonal matrix and its diagonal is divided into $ v $ segments of lengths $ p_i p_j $, $ i \in \{0, \ldots , v\} \setminus \{ j \} $. Each segment matches with the vectorization of the corresponding block $ \hat{\textbf{U}}_{ij} $ in the vectorized matrix $\mathrm{vect}( \hat{\textbf{A}}_j ) = \mathrm{vect}( \hat{\textbf{U}}_{j, -j} )$. All elements of the $ i $th segment of the diagonal of $ \textbf{C} $ are equal to,
	\[ 
	\sum_{\tau \in \mathcal{T}} \left( \lambda_{\tau j} \lambda_{\tau i} - \lambda_{\tau j}^2 - \lambda_{\tau i}^2 +  \lambda_{\tau j} \lambda_{\tau i} \right) = -\sum_{\tau \in \mathcal{T}} \left(  \lambda_{\tau i} - \lambda_{\tau j} \right)^2 < 0,
	\]  
	where the inequality follows from our definition of the blocks such that they differ in their autocovariances for at least one lag $ \tau \in \mathcal{T} $. Thus the matrix $ \textbf{C} $ is invertible and we may pre-multiply \eqref{eq:a_step_3} by $ \hat{\textbf{C}}{}^{-1} $ which is asymptotically well-defined. By Slutsky's theorem (for random matrices) we obtain,
	\begin{align}\label{eq:boundedness} 
	\sqrt{T} \mathrm{vect}(\hat{\textbf{A}}_j) = \hat{\textbf{C}}^{-1} \mathcal{O}_p(1) = \mathcal{O}_p(1).
	\end{align}
	As the choice of the column block $ j $ was arbitrary, the result \eqref{eq:boundedness} holds for all $ \hat{\textbf{A}}_j = \hat{\textbf{U}}_{j, -j} $, concluding the proof of Lemma \ref{lem:1}.
\end{proof}

\begin{proof}[Proof of Corollary \ref{cor:1}]
	The $ j $th diagonal block of the orthogonality constraint $ \hat{\textbf{U}}{}^\top \hat{\textbf{U}} = \textbf{I}_p $ reads,
	\[ 
	\sum_{k \neq j} \hat{\textbf{U}}_{kj}^\top \hat{\textbf{U}}_{kj} = \textbf{I}_{p_j} -  \hat{\textbf{U}}_{jj}^\top \hat{\textbf{U}}_{jj},
	\]
	where the left-hand side is by Lemma \ref{lem:1} of order $ \mathcal{O}_p(1/T) $, giving the first claim. The second one follows in a similar way by starting with $ \hat{\textbf{U}} \hat{\textbf{U}}{}^\top = \textbf{I}_p $ instead.
\end{proof}

\begin{proof}[Proof of Lemma \ref{lem:2}]

	Recall the definition of $ \hat{m}_q $ as,
	\[ 
	\hat{m}_q = \frac{1}{|\mathcal{T}| r^2} \sum_{\tau \in \mathcal{T}} \| \hat{\textbf{W}}_q^\top \hat{\textbf{H}}_\tau \hat{\textbf{W}}_q \|^2,
	\]
	where $ \hat{\textbf{W}}_q $ contains the columns of the SOBI-solution that correspond to the smallest $ q $ sums of squared pseudo-eigenvalues $ \sum_{\tau \in \mathcal{T}} \mathrm{diag}(\hat{\textbf{U}}{}^\top \hat{\textbf{H}}_\tau \hat{\textbf{U}})^2 $.
	
	By Lemma \ref{lem:1}, $ \hat{\textbf{U}}{}^\top \hat{\textbf{H}}_\tau \hat{\textbf{U}} = \hat{\textbf{P}} \tilde{\textbf{U}}{}^\top \hat{\textbf{H}}_\tau \tilde{\textbf{U}} \hat{\textbf{P}}{}^\top$ where $ \tilde{\textbf{U}} $ is the block-diagonal matrix on the right-hand side of Lemma \ref{lem:1}. We derive an asymptotic expression for the $ i $th diagonal block $ \hat{\textbf{E}}_{\tau ii} $ of the matrices $ \hat{\textbf{E}}_\tau = \tilde{\textbf{U}}{}^\top \hat{\textbf{H}}_\tau \tilde{\textbf{U}} $. By Lemmas~\ref{lem:0}, \ref{lem:1}, Corollary \ref{cor:1} and Assumption \ref{assu:consistent}, 
	\begin{align}\label{eq:pseudo_expansion}
	\begin{split}
	\hat{\textbf{E}}_{\tau ii} &= \sum_{s = 0}^v \sum_{t = 0}^v \hat{\textbf{U}}{}^\top_{si} \hat{\textbf{H}}_{\tau st} \hat{\textbf{U}}{}_{ti} \\
	&= \sum_{s \neq t} \hat{\textbf{U}}{}^\top_{si} \hat{\textbf{H}}_{\tau st} \hat{\textbf{U}}{}_{ti} + \sum_{s = 0}^v \hat{\textbf{U}}{}^\top_{si} ( \hat{\textbf{H}}_{\tau ss} - \lambda_{\tau s} \textbf{I}_{p_s}) \hat{\textbf{U}}{}_{si} + \sum_{s = 0}^v \lambda_{\tau s} \hat{\textbf{U}}{}^\top_{si} \hat{\textbf{U}}{}_{si} \\
	&= \lambda_{\tau i} \textbf{I}_{p_s} + \hat{\textbf{U}}{}^\top_{ii} ( \hat{\textbf{H}}_{\tau ii} - \lambda_{\tau i} \textbf{I}_{p_i}) \hat{\textbf{U}}{}_{ii} + \mathcal{O}_p(1/T),
	\end{split}
	\end{align}
	where $ \hat{\textbf{H}}_{\tau st} $ is the $ (s, t) $th block of $ \hat{\textbf{H}}_{\tau} $ in the indexing of Lemma \ref{lem:1}. As $ ( \hat{\textbf{H}}_{\tau ii} - \lambda_{\tau i} \textbf{I}_{p_i} ) = \mathcal{O}_p(1/\sqrt{T}) $, we have by \eqref{eq:pseudo_expansion} that the pseudo-eigenvalues converge in probability to the respective population values,
	\begin{align}\label{eq:pseudo_convergence}
	 \sum_{\tau \in \mathcal{T}} \mathrm{diag}(\hat{\textbf{E}}_\tau)^2 \rightarrow_p \sum_{\tau \in \mathcal{T}} \boldsymbol{\Lambda}^2_\tau.
	\end{align}
	Let $ A_q $ denote the event that the last $ q $ columns of $ \hat{\textbf{U}} $ are up to ordering equal to the last $ q $ columns of $ \tilde{\textbf{U}} $, that is, the ordering based on the estimated sums of squared pseudo-eigenvalues correctly identifies the noise components. By Assumption \ref{assu:signal_from_noise}, the signals are well-separated from the noise in the sense that no signal corresponds to the value zero in the diagonal of $ \sum_{\tau \in \mathcal{T}} \boldsymbol{\Lambda}^2_\tau $ and consequently, by \eqref{eq:pseudo_convergence}, we have $ \mathbb{P}(A_q) \rightarrow 1 $.
	
	Denote next the final column block of $ \tilde{\textbf{U}} $ by $ \tilde{\textbf{U}}_q \in \mathbb{R}^{r} $. Conditional on $ A_q $, the two column blocks are the same up to a permutation, $  \hat{\textbf{W}}_q = \tilde{\textbf{U}}_q \hat{\textbf{P}}_q  $ for some sequence of permutation matrices $ \hat{\textbf{P}}_q \in \mathbb{R}^{r \times r}$, and we can write for an arbitrary $ \varepsilon > 0$,
	\begin{align*}
	& \mathbb{P}\left(\sqrt{T} \left| \| \hat{\textbf{W}}_q^\top \hat{\textbf{H}}_\tau \hat{\textbf{W}}_q \| - \| \tilde{\textbf{U}}_q^\top \hat{\textbf{H}}_\tau \tilde{\textbf{U}}_q \| \right| < \epsilon \right) \\
	=& \mathbb{P}\left(\sqrt{T} \left| \| \hat{\textbf{W}}_q^\top \hat{\textbf{H}}_\tau \hat{\textbf{W}}_q \| - \| \tilde{\textbf{U}}_q^\top \hat{\textbf{H}}_\tau \tilde{\textbf{U}}_q \| \right|  < \epsilon \mid A_q \right) \mathbb{P}(A_q)\\
	+& \mathbb{P}\left(\sqrt{T} \left| \| \hat{\textbf{W}}_q^\top \hat{\textbf{H}}_\tau \hat{\textbf{W}}_q \| - \| \tilde{\textbf{U}}_q^\top \hat{\textbf{H}}_\tau \tilde{\textbf{U}}_q \| \right|  < \epsilon \mid A_q^c \right) \mathbb{P}(A_q^c) \\
	=&  \mathbb{P}(A_q) + \mathbb{P}\left(\sqrt{T} \left| \| \hat{\textbf{W}}_q^\top \hat{\textbf{H}}_\tau \hat{\textbf{W}}_q \| - \| \tilde{\textbf{U}}_q^\top \hat{\textbf{H}}_\tau \tilde{\textbf{U}}_q \| \right|  < \epsilon \mid A_q^c \right) ( 1 - \mathbb{P}(A_q) ) \rightarrow 1,
	\end{align*}
	showing the convergence in probability, $ \sqrt{T} \| \hat{\textbf{W}}_q^\top \hat{\textbf{H}}_\tau \hat{\textbf{W}}_q \| = \sqrt{T} \| \tilde{\textbf{U}}_q^\top \hat{\textbf{H}}_\tau \tilde{\textbf{U}}_q \| + o_p(1) $, for all $ \tau \in \mathcal{T} $. Furthermore, by \eqref{eq:pseudo_expansion},
	\[
	\sqrt{T} \| \tilde{\textbf{U}}_q^\top \hat{\textbf{H}}_\tau \tilde{\textbf{U}}_q \| = \sqrt{T} \| \hat{\textbf{E}}_{\tau 00} \| = \| \sqrt{T} \hat{\textbf{U}}{}^\top_{00}  \hat{\textbf{H}}_{\tau 00} \hat{\textbf{U}}{}_{00} + \mathcal{O}_p(1/\sqrt{T}) \| = \mathcal{O}_p(1),
	\]
	showing that,
	\begin{align*}
	T \| \hat{\textbf{W}}_q^\top \hat{\textbf{H}}_\tau \hat{\textbf{W}}_q \|^2 &= \| \sqrt{T} \hat{\textbf{U}}{}^\top_{00}  \hat{\textbf{H}}_{\tau 00} \hat{\textbf{U}}{}_{00} + \mathcal{O}_p(1/\sqrt{T}) \|^2 + o_p(1) \\
	&= \| \sqrt{T} \hat{\textbf{U}}{}^\top_{00}  \hat{\textbf{H}}_{\tau 00} \hat{\textbf{U}}{}_{00} \|^2 + o_p(1) \\
	&= T \cdot \mathrm{tr}(\hat{\textbf{U}}{}_{00} \hat{\textbf{U}}{}^\top_{00} \hat{\textbf{H}}_{\tau 00} \hat{\textbf{U}}{}_{00} \hat{\textbf{U}}{}^\top_{00} \hat{\textbf{H}}_{\tau 00} ) + o_p(1) \\
	&= T \| \hat{\textbf{H}}_{\tau 00} \|^2 + o_p(1) \\
	&= T \| \hat{\textbf{R}}_{\tau 00} \|^2 + o_p(1),
	\end{align*}
	where the second-to-last equality uses Corollary~\ref{cor:1} and the last one Lemma~\ref{lem:0}. Substituting now into the definition of $ \hat{m}_q $, we obtain the claim,
	\[ 
	T \cdot \hat{m}_q = \frac{T}{|\mathcal{T}| r^2} \sum_{\tau \in \mathcal{T}} \| \hat{\textbf{W}}_q^\top \hat{\textbf{H}}_\tau \hat{\textbf{W}}_q \|^2 = \frac{T}{|\mathcal{T}| r^2} \sum_{\tau \in \mathcal{T}} \| \hat{\textbf{R}}_{\tau 00} \|^2 + o_p(1).
	\]
\end{proof}

\begin{proof}[Proof of Lemma \ref{lem:3}]
	
	Write first,
	\begin{align*} 
	\hat{\textbf{S}}_\tau &= \frac{1}{T - \tau} \sum_{t=1}^{T - \tau} (\textbf{x}_t  - \bar{\textbf{x}}) (\textbf{x}_{t + \tau}  - \bar{\textbf{x}})^\top \\
	&=  \frac{1}{T - \tau} \sum_{t=1}^{T - \tau} \textbf{x}_t  \textbf{x}_{t + \tau}^\top -  \bar{\textbf{x}} \frac{1}{T - \tau} \sum_{t=1}^{T - \tau} \textbf{x}_{t + \tau}^\top -  \frac{1}{T - \tau} \sum_{t=1}^{T - \tau} \textbf{x}_t \bar{\textbf{x}}^\top + \bar{\textbf{x}} \bar{\textbf{x}}^\top.
	\end{align*}
	By Assumption \ref{assu:ma_infinity} and \cite[Proposition 11.2.2]{BrockwellDavis1991}, the latent series $ \textbf{z}_t = \textbf{x}_t $ (we use identity mixing) satisfy a central limit theorem, implying that $ \bar{\textbf{x}} = \mathcal{O}_p(1/\sqrt{T})$. Thus,
	\[ 
	\sqrt{T} ( \hat{\textbf{S}}_\tau - \textbf{D}_\tau ) = \sqrt{T} ( \frac{1}{T - \tau} \sum_{t=1}^{T - \tau} \textbf{x}_t  \textbf{x}_{t + \tau}^\top  - \textbf{D}_\tau ) + \mathcal{O}_p(1/\sqrt{T}),
	\]
	and it is sufficient to show the limiting result for the non-centered covariance and autocovariance matrices. Consequently, in the following we implicitly assume that no centering is used.
	
	The blocks $  \hat{\textbf{R}}_{\tau_100}, \ldots , \hat{\textbf{R}}_{\tau_{|\mathcal{T}|}00} $ are the symmetrized autocovariance matrices of the white noise part of $ \textbf{z}_t $. By Assumption \ref{assu:ma_infinity}, the latent series $ \textbf{z}_t $ has an $ \mathrm{MA}(\infty) $-representation and by considering only the last $ r $ components of the representation we see that also the white noise part has separately an $ \mathrm{MA}(\infty) $-representation. Now the lower right diagonal blocks of the matrices $ \boldsymbol{\Psi}_j $ take the roles of $ \boldsymbol{\Psi}_j $ and by Assumption \ref{assu:ma_infinity} these blocks equal $ \boldsymbol{\Psi}_{j00} = \delta_{j0} \textbf{I}_r $. Consequently, by \cite[Lemma 1]{MiettinenIllnerNordhausenOjaTaskinenTheis2016} the vector,
	\[ 
	\sqrt{T} \mathrm{vec} \left( \hat{\textbf{R}}_{\tau_100}, \ldots , \hat{\textbf{R}}_{\tau_{|\mathcal{T}|}00} \right),
	\]
	admits a limiting multivariate normal distribution with zero mean and the covariance matrix equal to
	\begin{align}\label{eq:covariance_matrix_V} 
	\textbf{V} = \begin{pmatrix}
	\textbf{V}_{11} & \cdots & \textbf{V}_{1|\mathcal{T}|} \\
	\vdots & \ddots & \vdots \\
	\textbf{V}_{|\mathcal{T}|1} & \cdots & \textbf{V}_{|\mathcal{T}||\mathcal{T}|}
	\end{pmatrix} \in \mathbb{R}^{|\mathcal{T}| r^2 \times |\mathcal{T}| r^2},
	\end{align}
	where $ \textbf{V}_{\ell m} = \mathrm{diag}(\mathrm{vec}(\textbf{D}_{\ell m})) (\textbf{K}_{rr} - \textbf{D}_{rr} + \textbf{I}_{r^2} )$. The matrices $ \textbf{D}_{\ell m} \in \mathbb{R}^{r \times r} $, $ \ell, m = 0, \ldots , |\mathcal{T}| $ (we do not use the zero index here but it appears in the following formulas so it is included), are defined element-wise as,
	\begin{align}\label{eq:D_matrices}
	\begin{split} 
	(\textbf{D}_{\ell m})_{ii} &= (\beta_i - 3) (\textbf{F}_\ell)_{ii} (\textbf{F}_m)_{ii} + \sum_{k = -\infty}^\infty \left[  (\textbf{F}_{k + \ell})_{ii} (\textbf{F}_{k + m})_{ii} + (\textbf{F}_{k + \ell})_{ii} (\textbf{F}_{k - m})_{ii}   \right] \\
	(\textbf{D}_{\ell m})_{ij} &= (\beta_{ij} - 1) (\textbf{F}_\ell + \textbf{F}_\ell^\top)_{ij} (\textbf{F}_m + \textbf{F}_m^\top)_{ij} \\
	&+ \frac{1}{2} \sum_{k = -\infty}^\infty \left[  (\textbf{F}_{k + \ell - m})_{ii} (\textbf{F}_{k})_{jj} + (\textbf{F}_{k})_{ii} (\textbf{F}_{k + \ell + m})_{jj} \right], \quad i \neq j.
	\end{split}
	\end{align}
	where $ \beta_i = \mathrm{E}(\epsilon_{ti}^4) $, $ \beta_{ij} = \mathrm{E}(\epsilon_{ti}^2 \epsilon_{ti}^2) $ and $ \epsilon_{ti} $, $ i = 1, \ldots ,r $, refers to the $ i $th innovation component in the $ \mathrm{MA}(\infty) $-representation of the white noise part. The matrices $ \textbf{F}_\ell $ are defined as $ \textbf{F}_\ell = \sum_{j=-\infty}^\infty \boldsymbol{\psi}_j \boldsymbol{\psi}_{j + \ell}^\top $ where the vectors $ \boldsymbol{\psi}_j \in \mathbb{R}^r$ contain the diagonal elements of the matrices $ \boldsymbol{\Psi}_{j00} $.
	
	Under Assumption \ref{assu:ma_infinity} we have $ \boldsymbol{\psi}_j = \delta_{j0} \boldsymbol{1}_r $ where the vector $ \boldsymbol{1}_r \in \mathbb{R}^r$ consists solely of zeroes. Consequently $ \textbf{F}_\ell = \delta_{\ell 0} \textbf{J}_r $. Plugging this in to \eqref{eq:D_matrices} gives for the diagonal elements of $ \textbf{D}_{\ell m} $ that
	\begin{align*}
	(\textbf{D}_{\ell m})_{ii} &= (\beta_i - 3) \delta_{\ell 0} \delta_{m 0} + \sum_{k = -\infty}^\infty \left[ \delta_{(k + \ell) 0} \delta_{(k + m) 0} + \delta_{(k + \ell) 0}\delta_{(k - m) 0} \right] \\
	&= (\beta_i - 3) \delta_{\ell 0} \delta_{m 0} + \delta_{\ell m} + \delta_{\ell 0} \delta_{m 0},
	\end{align*}
	which implies that the matrices $ \textbf{V}_{\ell m} $, $ \ell, m = 1, \ldots , |\mathcal{T}| $, have non-zero diagonals precisely when $ \ell = m $ and then the diagonal is filled with ones. Plugging $ \textbf{F}_\ell = \delta_{\ell 0} \textbf{J}_r $ in to the definition of the diagonal elements in \eqref{eq:D_matrices} gives,
	\begin{align*}
	(\textbf{D}_{\ell m})_{ij} &= (\beta_{ij} - 1) 2 \delta_{\ell 0} 2 \delta_{\ell 0} + \frac{1}{2} \sum_{k = -\infty}^\infty \left[  \delta_{(k + \ell - m)0} \delta_{k0} + \delta_{k0} \delta_{(k + \ell + m)0} \right] \\
	&= 4 (\beta_{ij} - 1) \delta_{\ell 0} \delta_{\ell 0} + \frac{1}{2} (\delta_{\ell m} + \delta_{\ell 0} \delta_{m 0}),
	\end{align*}
	which says that the matrices $ \textbf{V}_{\ell m} $, $ \ell, m = 1, \ldots , |\mathcal{T}| $, have non-zero off-diagonals precisely when $ \ell = m $ and then the off-diagonal is filled with one-halves.
	
	Combining the forms for the diagonals and off-diagonals, we get $ \textbf{D}_{\ell m} = \delta_{\ell m} ( \textbf{J}_r + \textbf{I}_r)/2$, $ \ell, m = 1, \ldots , | \mathcal{T} | $. Plugging this in to \eqref{eq:covariance_matrix_V} now gives the claim and concludes the proof.
	
\end{proof}

\begin{proof}[Proof of Proposition \ref{prop:1}]
	By Lemma \ref{lem:2}, the limiting distribution of $ T | \mathcal{T} | r^2 \cdot \hat{m}_q $ is the same as the limiting distribution of
	\begin{align*} 
	T | \mathcal{T} | r^2 \cdot \hat{m}_q^* 
	&= \sum_{\tau \in \mathcal{T}} \| \sqrt{T} \hat{\textbf{R}}_{\tau00} \|^2\\
	&= \| \sqrt{T} \mathrm{vec} \left( \hat{\textbf{R}}_{\tau_100}, \ldots , \hat{\textbf{R}}_{\tau_{|\mathcal{T}|}00} \right) \|^2\\
	&= \sqrt{T} \mathrm{vec}^\top \left( \hat{\textbf{R}}_{\tau_100}, \ldots , \hat{\textbf{R}}_{\tau_{|\mathcal{T}|}00} \right) \sqrt{T} \mathrm{vec} \left( \hat{\textbf{R}}_{\tau_100}, \ldots , \hat{\textbf{R}}_{\tau_{|\mathcal{T}|}00} \right).
	\end{align*}
	By Lemma~\ref{lem:3} and the continuous mapping theorem, the limiting distribution of $ T | \mathcal{T} | r^2 \cdot \hat{m}_q^* $ is the same as the distribution of $ \textbf{y}^\top \textbf{y} $ where $ \textbf{y} $ is a mean-zero multivariate normal random vector with the covariance matrix $ \textbf{V} $ given in Lemma~\ref{lem:3}. Equivalently, the limiting distribution of $ T | \mathcal{T} | r^2 \cdot \hat{m}_q^* $ is the same as the distribution of $ \textbf{y}_0^\top \textbf{V} \textbf{y}_0 $ where $ \textbf{y}_0 $ is a standardized multivariate normal random vector. By \cite[Chapter 3.5]{serfling2009approximation}, if $ \textbf{V} $ is idempotent and symmetric, then the limiting distribution of $ \textbf{y}_0^\top \textbf{V} \textbf{y}_0 $ is $ \chi^2_{\mathrm{tr}(\textbf{V})} $. To see that $ \textbf{V} $ is indeed idempotent, we inspect the square of its arbitrary diagonal block $ \textbf{V}_0 $,
	\begin{align*}
	\textbf{V}_0^2 = \left[ \mathrm{diag}(\mathrm{vec}(\textbf{J}_{r} + \textbf{I}_{r})/2) (\textbf{K}_{rr} - \textbf{D}_{rr} + \textbf{I}_{r^2} ) \right]^2.
	\end{align*}
	We simplify using $ \mathrm{diag}(\mathrm{vec}(\textbf{J}_{r})) = \textbf{I}_{r^2} $,  $\mathrm{diag}(\mathrm{vec}(\textbf{I}_{r})) = \textbf{D}_{rr} $, $ \textbf{D}_{rr} \textbf{K}_{rr} = \textbf{D}_{rr}$, $ \textbf{D}_{rr}^2 = \textbf{D}_{rr} $ and $ \textbf{K}_{rr}^2 = \textbf{I}_{r^2} $, to obtain $ \textbf{V}_0 = (\textbf{K}_{rr} + \textbf{I}_{r^2})/2 $, which is symmetric, and,
	\[ 
	\textbf{V}_0^2 = \left[ \frac{1}{2} (\textbf{K}_{rr} + \textbf{I}_{r^2}) \right]^2 = \frac{1}{4} (2 \textbf{K}_{rr} + 2 \textbf{I}_{r^2}) = \textbf{V}_0.
	\]
	Thus $ \textbf{V}_0 $ is idempotent and symmetric and consequently $ \textbf{V} $, constituting solely of the $ |\mathcal{T}| $ diagonal blocks each equal to $ \textbf{V}_0 $, is also idempotent and symmetric. The trace of $ \textbf{V} $ is $ |\mathcal{T}| $ times the trace of $ \textbf{V}_0 $, which equals,
	\[ 
	\mathrm{tr}(\textbf{V}_0) = \frac{1}{2} \mathrm{tr}(\textbf{K}_{rr}) + \frac{1}{2} \mathrm{tr}(\textbf{I}_{r^2}) = \frac{1}{2}(r + r^2) = \frac{1}{2}r(r + 1).
	\]
	The trace of $ \textbf{V} $ is then $ |\mathcal{T}| r(r + 1)/2 $ and we have proved that the limiting distribution of  $ \textbf{y}_0^\top \textbf{V} \textbf{y}_0 $, and consequently that of $  T | \mathcal{T} | r^2 \cdot \hat{m}_q $, is  $ \chi^2_{| \mathcal{T} |r(r + 1)/2} $.
\end{proof}

\markboth{\hfill{\footnotesize\rm JONI VIRTA AND KLAUS NORDHAUSEN} \hfill}
{\hfill {\footnotesize\rm DETERMINING THE DIMENSION IN SECOND ORDER SEPARATION} \hfill}


\bibliography{refs}
\bibliographystyle{Chicago}

\vskip .65cm
\noindent
Aalto University, Finland
\vskip 2pt
\noindent
E-mail: joni.virta@aalto.fi
\vskip 2pt

\noindent
Vienna University of Technology
\vskip 2pt
\noindent
E-mail: klaus.nordhausen@tuwien.ac.at
\end{document}